\def\RR{\mathbb R}
\def\NN{\mathbb N}
\def\ZZ{\mathbb Z}
\date{}
\newcommand{\bbinomial}[2]{\begin{bmatrix}#1\\#2\end{bmatrix}}
\newtheorem{proposition}{Proposition}
\newtheorem{lemma}{Lemma}
\newtheorem{theorem}{Theorem}
\author{ Mirko D'Ovidio \and Anna Chiara Lai \and Paola Loreti}
\begin{document}
\title{Generalized binomials in fractional calculus}

\maketitle
{\small \begin{center} Dipartimento di Scienze di Base e Applicate per l'Ingegneria,\\
Sapienza Università di Roma,\\ 
Via A. Scarpa 16, 00161 Roma, Italy\end{center}}

\begin{abstract}
We consider a class of generalized binomials emerging in fractional calculus. After establishing some general properties, we focus on a particular yet relevant case, for which we provide several ready-for-use combinatorial identities, including an adapted version of the Pascal's rule. We then investigate the associated generating functions, for which we establish a recursive,  combinatorial and integral formulation. From this, we derive an asymptotic version of the Binomial Theorem.  A combinatorial and asymptotic analysis of some finite sums completes the paper.   
\end{abstract}
	\section{Introduction}
The continuous binomial function is a generalization of the classical binomial coefficient defined by
$$\binom{y}{x}:=\frac{\Gamma(y+1)}{\Gamma(x+1)\Gamma(y-x+1)}, \quad x\in \RR,\,y\in\RR\setminus\ZZ_{\leq -1},$$		
	where $\Gamma(\cdot)$ is the Gamma function
$$\Gamma(t+1):=\lim_{n\to\infty} n^t \prod_{k=0}^n\left(1+\frac{t}{k}\right)^{-1} \quad t\in\RR,\,t\not\in\ZZ_{\leq-1}.$$	
for which we also recall the classical, integral formulation
$$\Gamma(t+1)=\int_0^{+\infty} \tau^{t}e^{-\tau}d\tau \qquad t>-1.$$	
For a fixed real $y$ which is a non-positive integer, the natural domain of the binomial function is given by $X_y:=\{x\in \RR\mid x,y-x\not\in \ZZ_{\leq -1}\}$. However $X_y$ can be continuously extended to the real line by setting   	
$$\binom{y}{x}:=0 \quad x,y-x\in \ZZ_{\leq -1}.$$	
	The resulting continuous binomial function is smooth, symmetric about $y/2$ (where it attains its global maximum), vanishing as $x\to\pm \infty$ and displaying a  graph rensembilng a damped sinusoid \cite{binomialthm}. Moreover, the binomial function is a smooth interpolation of the generalized binomial, defined  for  $x=k$ , with $k\in\NN$ by
	$$\binom{y}{k}:=\begin{cases}\frac{y(y-1)\cdots(y-k+1)}{k!}\quad&k\geq 1\\
	1&k=0\\
	0&k\leq -1.\end{cases}$$
	Also note that, by also assuming $y=n$, $n\in\NN$, we recover the classical binomial coefficient:
	$$\binom{n}{k}:=\frac{n!}{k!(n-k)!}.$$
	This can be readily verified by recalling the recursive relation $\Gamma(1+x)=x\Gamma(x)$ and its direct consequence $\Gamma(1+n)=n!$.
	
The binomial function inherits, in a suitable sense,  many of the combinatorial properties of the binomial coefficient. To give an example, we recall that the Binomial Theorem states the identity 
\begin{equation}\label{binomthm}
\sum_{k=0}^n \binom{n}{k}=2^n\qquad n\in\NN
\end{equation}
and, more generally,  replacing $n$ with some real $y>-1$, the identity (called Generalized Binomial Theorem) holds
$$\sum_{k=0}^{+\infty} \binom{y}{k}=2^{y}\qquad y\in\RR,\, y>-1.$$
Allowing also $k$ to be a real number, it is proved in \cite{binomialthm} the integral formulation of the Binomial Theorem
\begin{equation}\label{integralsum}
\int_{-\infty}^{+\infty} \binom{y}{x}dx=2^{y}, \quad y\in \RR,\,y\not\in \ZZ_{\leq-1}.\end{equation}

The aim of the present paper is to investigate the properties of a particular class of binomial functions, defined for $\beta\in \RR,\beta>0$, $n,k\in\NN$ by
\begin{equation}\label{bbdef}\bbinomial{n}{k}_\beta:=\begin{cases}\dfrac{\Gamma(\beta n+1)}{\Gamma(\beta k+1)\Gamma(\beta(n-k)+1)}\qquad &\text{if }\beta(n-k)\not\in \ZZ_{\leq-1}\\
0 &\text{otherwise.}
\end{cases}\end{equation}
see Figure \ref{gammaplot} for a plot of this generalized binomial coefficient for $\beta=1/2$ and see Figure \ref{gammabeta} for a plot of the global maxima of \eqref{bbdef} with  $\beta$ ranging in $(0,2]$.
\begin{figure}
\subfloat[]{\includegraphics[scale=0.5]{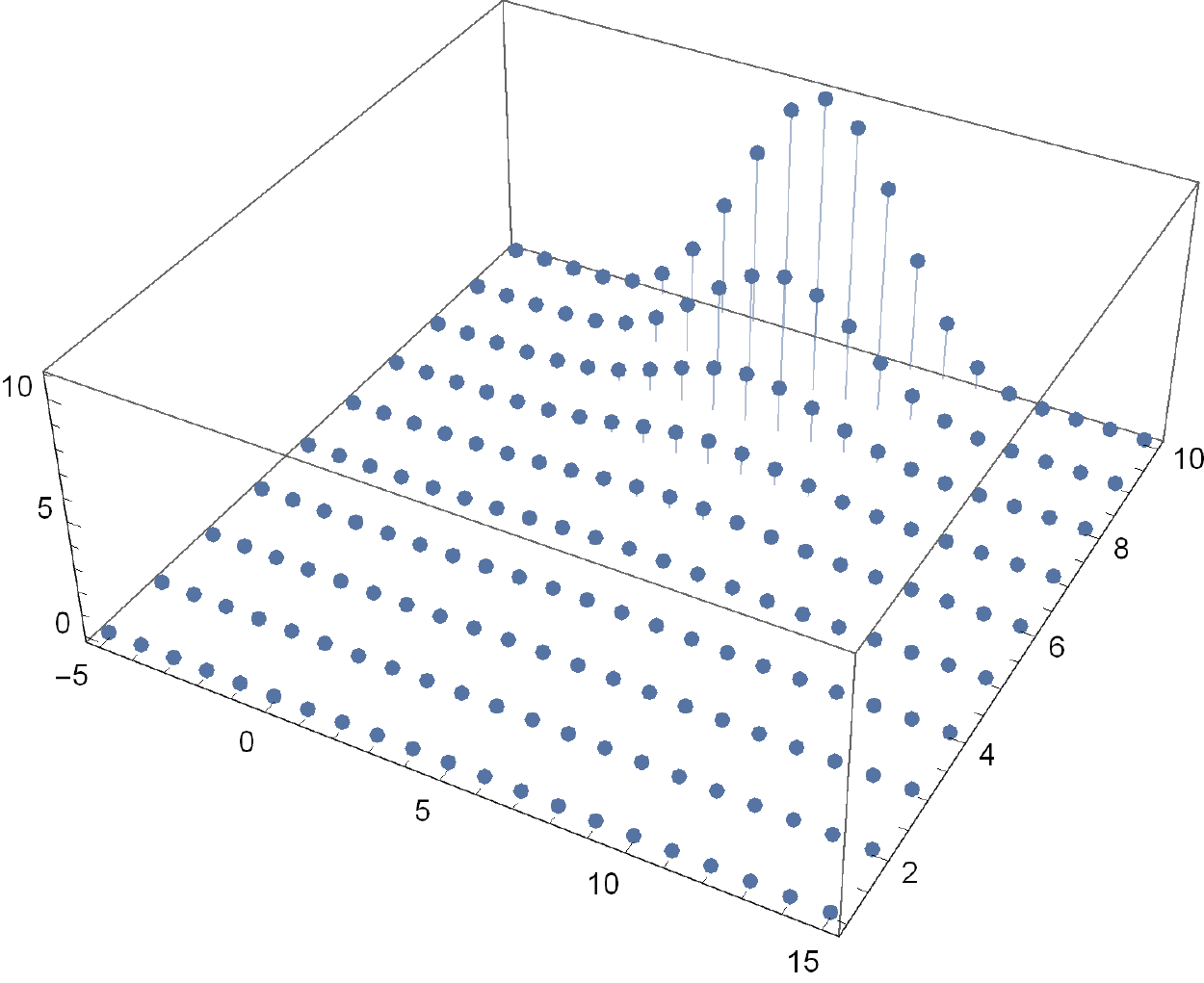}}\subfloat[]{\includegraphics[scale=0.5]{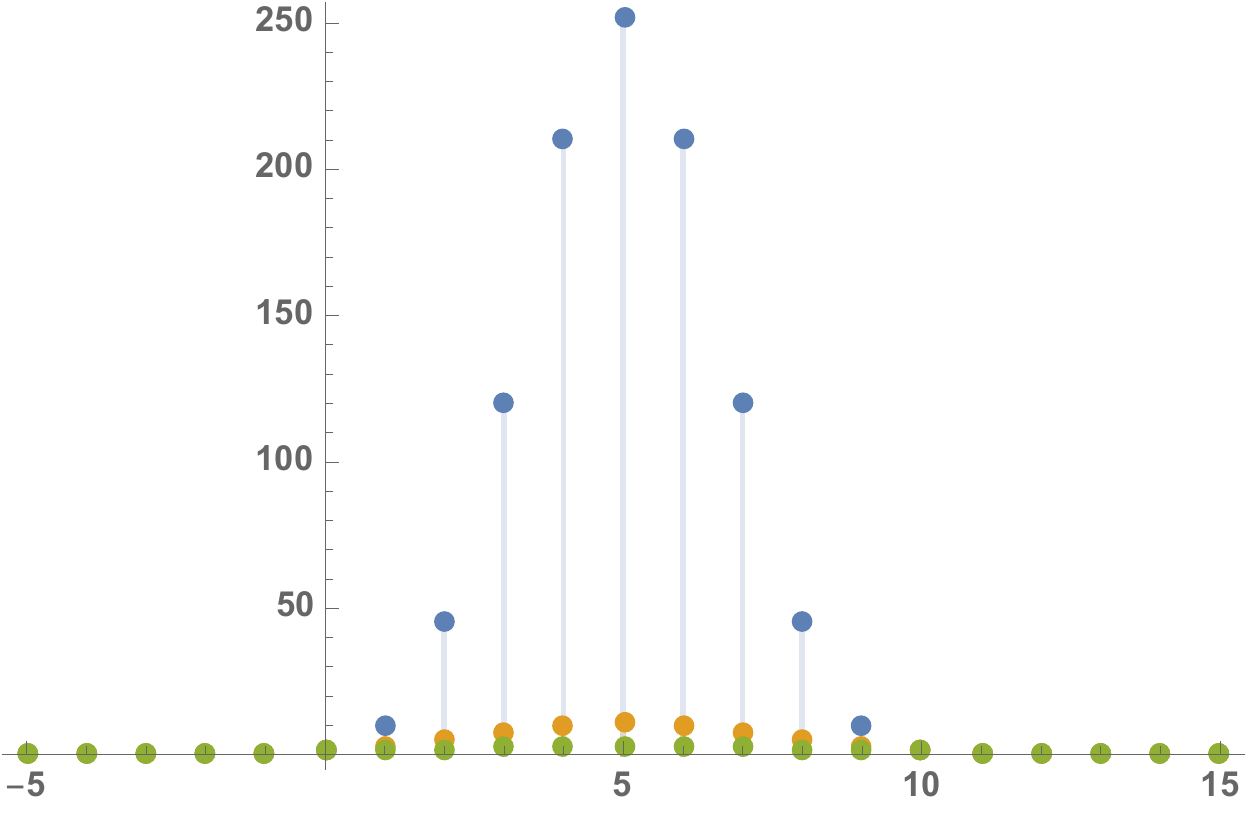}}
\caption{Fractional binomial coefficients for $\beta=1/2$. On the left, the values of $\left[n \atop{k}\right]_{1/2}$ for $n=0,\dots,10$ and $k=-5,\dots,15$ and on the right $\left[ 10 \atop k\right]_{1/2}$ with $k$ ranging between $-5$ and $15$ and $\beta=1/4$ (in blue), $\beta=1/2$ (in orange) and $\beta=1$ (in green).\label{gammaplot}}
\end{figure}
\begin{figure}
\includegraphics[scale=0.4]{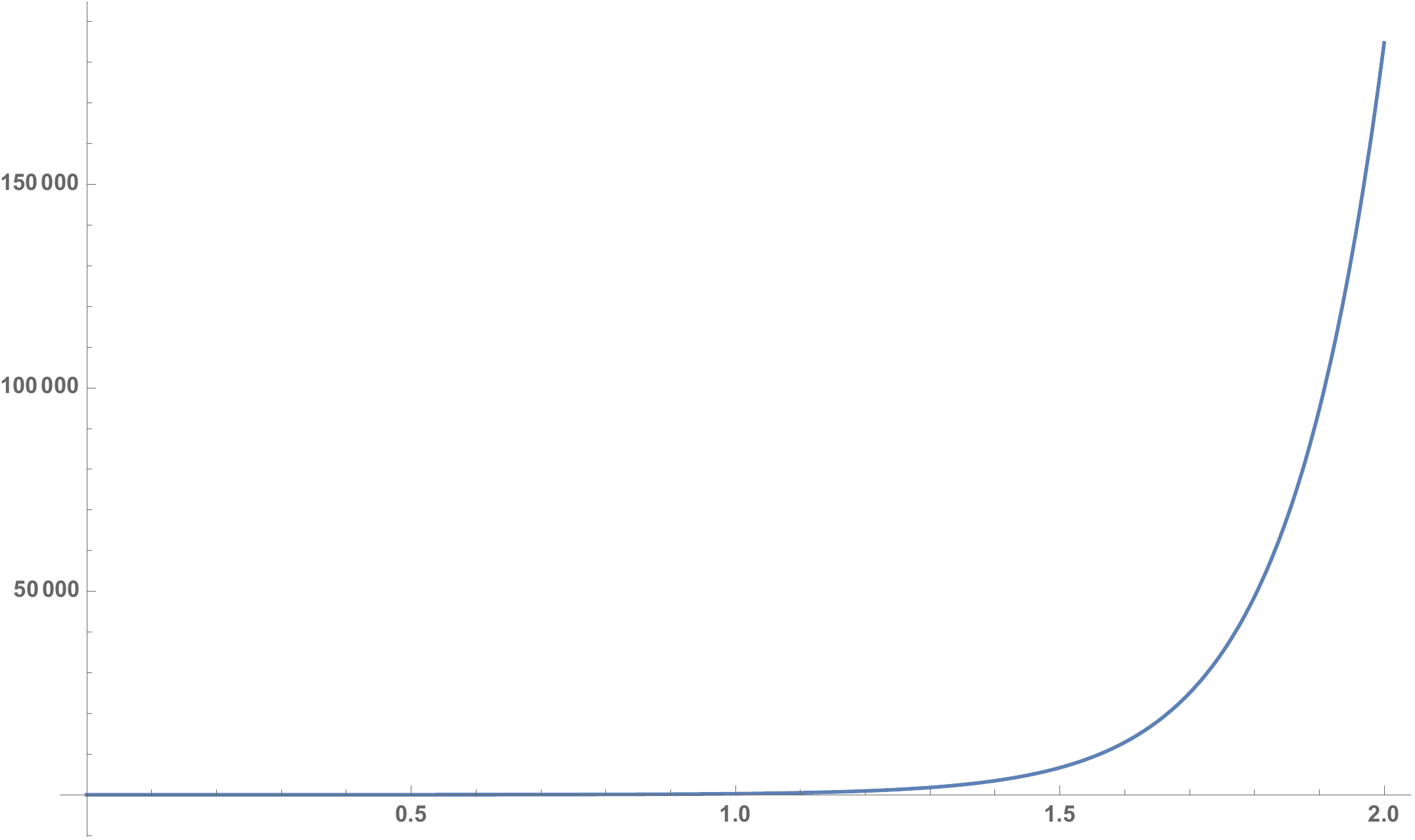}
\caption{For $n=8$, the maximum of the sequence $\left[8 \atop k\right ]_\beta$ (with $k\in\ZZ$) is attained at $k=n/2=4$ for every $\beta>0$. Here we plot the function
$\left[8\atop 4\right]_\beta$ with $\beta\in(0,2]$.\label{gammabeta}}
\end{figure}

The interest in this class of generalized binomial coefficients defined in \eqref{bbdef} is mainly motivated by applications to fractional calculus --  we refer to  \cite{fractional} for an introduction on the topic and as a general reference for the arguments below.{ We only recall here the well-known Caputo (also termed Dzerbayshan-Caputo) derivative of order $\beta \in (0,1)$ of $u : [0, \infty) \to \mathbb{R}$ given by
\begin{align*}
D^\beta u(x) = \frac{1}{\Gamma(1-\beta)} \int_0^x \frac{u^\prime(s)}{(t-s)^\beta} ds 
\end{align*}
where $u^\prime = du/ds$. There are many properties to be noticed for this fractional operator. Consider for instance the Mittag-Leffler functions
$$E_\beta(x):= \frac{1}{2\pi i}\int_{Ha} \frac{\zeta^{\beta -1} e^\zeta}{\zeta^\beta - x}d\zeta =\sum_{n=0}^\infty \frac{1}{\Gamma(\beta n+1)}x^n, \quad \beta >0, \; x \in \mathbb{C}$$  
($Ha$ is the Hankel path) which are widely investigated generalizations of the exponential function in the framework of fractional calculus. 
The Mittag-Leffler functions have been introduced by the Swedish mathematician Mittag-Leffler at the beginning of the last century. 
These completely monotone functions  attracted an increasing attention of mathematicians and applied scientists because of their key role in treating problems related to integral and differential equations of fractional order.

It is well-known that the following relation holds true
\begin{align*}
D^\beta_x \, E_\beta ( \mu \, x^\beta) = \mu \, E_\beta (\mu \, x^\beta), \quad \mu \in \mathbb{R}
\end{align*}
that is, the Mittag-Leffler is an eigenfunction for the Caputo derivative. It solves linear fractional relaxation equations. For $\beta=1$ the Mittag-Leffler becomes the exponential $E_1(x)=e^x$ whereas, for $\beta \in (0,1)$
  we have the following asymptotic behaviours
\begin{align*}
\frac{E_\beta(x)}{e_0(x)} \to 1, \quad \textrm{as} \quad x\to 0 \quad \textrm{and} \quad  \frac{E_\beta(x)}{e_\infty(x)} \to 1, \quad \textrm{as} \quad  x\to \infty
\end{align*} 
where
\begin{align*}
e_0(x) = \exp\left( - \frac{x^\beta}{\Gamma(1+\beta)} \right), \quad \textrm{and} \quad e_\infty(x) = \frac{x^{-\beta}}{\Gamma(1-\beta)}.
\end{align*}

The generalized coefficients considered here emerge naturally when considering the square of $E_{\beta}(x)$.} Indeed, computing the Cauchy product of $E_{\beta}(x)$ for itself, we have
$$ (E_{\beta}(x))^2=\sum_{n=0}^\infty \sum_{k=0}^n \frac{1}{\Gamma(\beta k+1)\Gamma(\beta(n-k)+1)} x^n= \sum_{n=0}^\infty \sum_{k=0}^n \bbinomial{n}{k}_\beta \frac{x^n}{\Gamma(\beta n+1)}.$$
When $\beta=1$ then \eqref{binomthm} readily implies $(E_1(x))^2=E_1(2x)$, namely the elementary property of the exponential function $(e^x)^2=e^{2x}$. Of course, this is not true when replacing $1$ with a generic $\beta>0$, see Figure \ref{MLfig}. {We underline the fact that the Caputo derivative is a non-local operator and therefore it is associated to some memory effect. The Mittag-Leffler turns out to be substantially different from the exponential, in particular
\begin{align*}
E_\beta(x) \, E_\beta(y) \neq E_\beta (x+y).
\end{align*}
Because of this, many problems become very difficult to be treated. This is crucial in the investigation of fractional logistic equation for instance (see \cite{logistic}). An interesting discussion on this point has been recently faced in \cite{DAT}}. The analysis of the Mittag-Leffler function may then benefit of a combinatorial investigation of the partial sum  $\sum_{k=0}^n \left[n\atop{k}\right]_\beta$. 
\begin{figure}	\centering \includegraphics[scale=0.5]{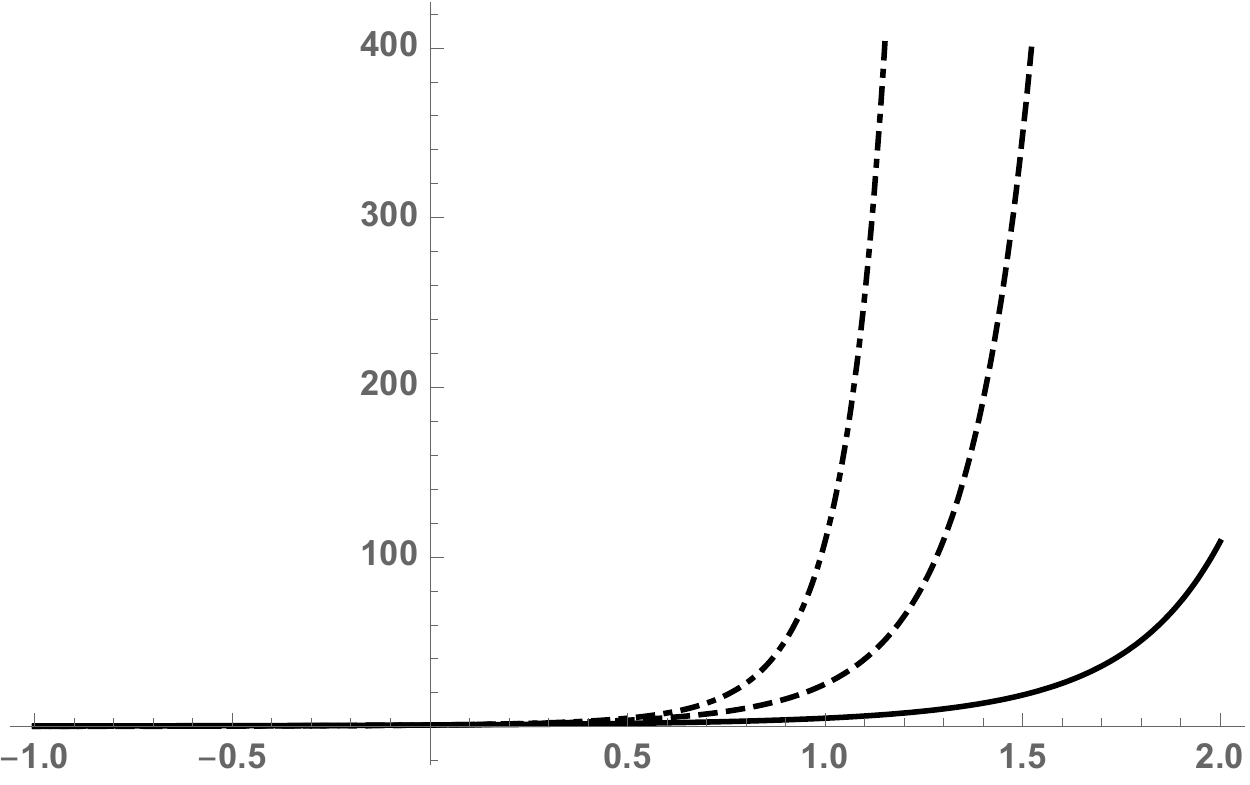}
\caption{The Mittag-Leffler function $E_{1/2}(x)$ (continuous line), its square $(E_{1/2}(x))^2$ (dashed line) and the function $E_{1/2}(2x)$ (dot-dashed line), for $x\in[-0.5,2]$.\label{MLfig}}
\end{figure}

\vskip0.5cm
We also remark that the coefficients \eqref{bbdef} can be expressed in terms of hyper-geometric functions. In particular, we have by Gauss theorem
$$\bbinomial{n}{k}_\beta=\,_2F_1(-\beta k,-\beta (n-k);1;1).$$
where 
$$_2F_1(a,b;c;x):=\frac{\Gamma(c)}{\Gamma(c-b)\Gamma(b)}\int_0^1t^{b-1}(1-t)^{c-b-1}(1-xt)^{-a}dt\qquad  |x|\leq 1.$$

In particular we have for $\beta=1/2$ 
$$\bbinomial{n}{k}_{1/2}=\,_2F_1(-k/2,-(n-k)/2;1;1).$$
Now,  in the case $n=2$ and $k=1$ the function $_2F_1(-1/2,-1/2;1;x^2)$ has a precise geometric interpretation: the quantity 
$ 4\pi (a+b)_2F_1(-1/2,-1/2;1;(a-b)^2/(a+b)^2)$ is indeed the length of the ellipse with semi-axes $a,b>0$. The investigation of the case $\beta=1/2$ (with general $n$ and $k$) may also allow for analogous geometrical interpretations.

\vskip0.5cm
The paper is organized as follows. We begin by stating some basic properties for generic, positive $\beta$ and, in the rest of the paper, we focus on the particular case $\beta=1/2$.
In Section \ref{S2} we collect a set of ready-for-use combinatorial identities  derived by an elementary approach and we stress analogies and differences with the classical case $\beta=1$. In Section \ref{S3} we investigate the generating functions associated to \eqref{bbdef}. Finally, Section \ref{S4} is devoted to the combinatorial and asymptotic analysis of partial sums involving \eqref{bbdef}.

\section{Combinatorial identities for generalized binomials}\label{S2}
We begin our investigation by stating some first properties of the generalized binomials defined in \eqref{bbdef} for a general $\beta>0$.
To this end we preliminarly recall the following identities
\begin{equation}\label{gamma}
\Gamma(0)=\Gamma(1)=1;\quad \Gamma(x+1)= x\Gamma(x)\quad\forall x\in \RR^+.
\end{equation}
In particular, the latter equation implies $\Gamma(n+1)=n!$ for all $n\in\NN$. 
\begin{proposition}
For all $\beta>0$ and for all $n\in\NN$
\begin{equation}\bbinomial{n}{0}_\beta=\bbinomial{n}{n}_\beta=1 \label{21gen}.\end{equation}
Moreover the following identities hold
\begin{itemize}
\item[(i)] Reflection formula:
\begin{equation}\label{reflection}
\bbinomial{n}{k}_\beta=\bbinomial{n}{n-k}_\beta \quad \forall k\in \ZZ.
\end{equation}
\item[(ii)]Cancellation identity:
\begin{equation}\label{diff}
\bbinomial{n}{h}_\beta\bbinomial{n-h}{k}_\beta=\bbinomial{n}{k}_\beta\bbinomial{n-k}{h}_\beta\qquad\forall h,k\in\ZZ,\,h,k\leq n. \end{equation}

\end{itemize}
\end{proposition}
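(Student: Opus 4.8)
The plan is to reduce all three statements to a single uniform representation of the generalized binomial and then read off the identities by elementary manipulation of Gamma factors. First I would rewrite \eqref{bbdef} using the reciprocal Gamma function $1/\Gamma$, which is entire and vanishes exactly at the non-positive integers. With this convention one has, for every $n\in\NN$ and every $k\in\ZZ$,
\begin{equation}\label{reprplan}
\bbinomial{n}{k}_\beta=\Gamma(\beta n+1)\,\frac{1}{\Gamma(\beta k+1)}\,\frac{1}{\Gamma(\beta(n-k)+1)}.
\end{equation}
Indeed, when $\beta(n-k)\in\ZZ_{\leq-1}$ the factor $1/\Gamma(\beta(n-k)+1)$ vanishes, reproducing the second branch of \eqref{bbdef}, while otherwise it returns the first branch; moreover \eqref{reprplan} supplies the natural extension to negative values of $k$ needed in (i)--(ii). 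I would stress that it is \eqref{reprplan}, rather than the piecewise definition \eqref{bbdef}, that makes the zero set symmetric and hence manifestly compatible with the identities below.

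Granting \eqref{reprplan}, the boundary values \eqref{21gen} follow by setting $k=0$ and $k=n$ and invoking $\Gamma(1)=1$ from \eqref{gamma}: in each case one reciprocal factor equals $1/\Gamma(\beta n+1)$ and cancels the numerator, while the other equals $1/\Gamma(1)=1$. The reflection formula \eqref{reflection} is then immediate, since the right-hand side of \eqref{reprplan} is invariant under the exchange $k\mapsto n-k$, which merely swaps the two reciprocal-Gamma factors; this also shows that the two members vanish simultaneously, so the identity holds for every $k\in\ZZ$.

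For the cancellation identity \eqref{diff} I would expand each of the four binomials through \eqref{reprplan}. The left-hand side becomes
\begin{equation}\label{lhsplan}
\Gamma(\beta n+1)\,\frac{1}{\Gamma(\beta h+1)}\,\frac{1}{\Gamma(\beta(n-h)+1)}\cdot\Gamma(\beta(n-h)+1)\,\frac{1}{\Gamma(\beta k+1)}\,\frac{1}{\Gamma(\beta(n-h-k)+1)},
\end{equation}
and symmetrically for the right-hand side with the roles of $h$ and $k$ interchanged. After cancelling $\Gamma(\beta(n-h)+1)$ on the left and $\Gamma(\beta(n-k)+1)$ on the right, both expressions collapse to the same symmetric quantity
\begin{equation}\label{symmplan}
\frac{\Gamma(\beta n+1)}{\Gamma(\beta h+1)\,\Gamma(\beta k+1)\,\Gamma(\beta(n-h-k)+1)},
\end{equation}
where I use $n-h-k=n-k-h$.

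The one point requiring care — and the only real obstacle — is the legitimacy of cancelling the factors $\Gamma(\beta(n-h)+1)$ and $\Gamma(\beta(n-k)+1)$ against their reciprocals, since a priori such a step could conceal a $0\cdot\infty$ indeterminacy at a pole of $\Gamma$. Here the hypothesis $h,k\leq n$ is precisely what is needed: it forces $n-h\geq0$ and $n-k\geq0$, so that $\beta(n-h)$ and $\beta(n-k)$ are non-negative, hence regular points of $\Gamma$ where the function is finite and non-vanishing. The cancellation in \eqref{lhsplan} is therefore genuine. Any residual vanishing, arising from $\beta h$, $\beta k$ or $\beta(n-h-k)$ being a negative integer, occurs through factors present on both members of \eqref{diff}, so it affects the two sides identically and the equality persists.
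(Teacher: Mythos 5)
Your proposal is correct and follows essentially the same route as the paper: the reflection formula from the symmetry of the defining expression under $k\mapsto n-k$, the boundary values from $\Gamma(1)=1$, and the cancellation identity by expanding all four coefficients and cancelling $\Gamma(\beta(n-h)+1)$ (resp.\ $\Gamma(\beta(n-k)+1)$) against its reciprocal before regrouping. The only difference is one of rigor rather than of method: by encoding the definition through the entire function $1/\Gamma$, you handle uniformly the degenerate cases where $\beta k$ or $\beta(n-h-k)$ lies in $\ZZ_{\leq -1}$, and you justify why $h,k\leq n$ makes the cancellation legitimate, points the paper's computation passes over silently.
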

\begin{proof}By definition  
$$\bbinomial{n}{k}_\beta=\frac{\Gamma(\beta n+1)}{\Gamma(\beta k+1)\Gamma(\beta(n-k)+1)}=\bbinomial{n}{n-k}_\beta$$
and this proves \eqref{reflection}. Recalling $\Gamma(0)=1$, this also readily yields \eqref{21}. Finally, the Cancellation identity follows by 
\begin{align*}
\bbinomial{n}{h}_\beta\bbinomial{n-h}{k}_\beta&=\frac{\Gamma(\beta n+1)}{\Gamma(\beta h+1)\Gamma(\beta(n-h)+1)}\cdot \frac{\Gamma(\beta(n-h)+1)}{\Gamma(\beta k+1)\Gamma(\beta(n-h-k)+1)}\\
&=\frac{\Gamma(\beta n+1)}{\Gamma(\beta k+1)\Gamma(\beta(n-k)+1)}\cdot \frac{\Gamma(\beta(n-k)+1)}{\Gamma(\beta h+1)\Gamma(\beta(n-h-k)+1)}\\
&=\bbinomial{n}{k}_\beta\bbinomial{n-k}{h}_\beta.
\end{align*}
\end{proof}
\subsection{Combinatorial identities for $\beta=1/2$}

In order to lighten the notation, we set
$$ \bbinomial{n}{k}:=\bbinomial{n}{k}_{1/2}=\frac{\Gamma(\frac{1}{2} n+1)}{\Gamma(\frac{1}{2} k+1)\Gamma(\frac{1}{2}(n-k)+1)}.$$
We make use in what follows of the \emph{duplication formula}
\begin{equation}\label{gammadup}
\Gamma(x)\Gamma(x+1/2)=2^{1-2x}\sqrt{\pi}\Gamma(2x)\quad \forall x\in \RR^+.
\end{equation}
 which also reads, for $x=n/2$,  
$$\Gamma\left(\frac{1}{2}(n+1)\right)=2^{1-n}\sqrt{\pi}\frac{\Gamma(n)}{\Gamma(n/2)}.$$

We begin our investigation with some closed form expression. 
\begin{proposition}
For all $n\in\NN$ and for all $k\in\ZZ$:
\begin{align}
&\bbinomial{n}{0}=\bbinomial{n}{n}=1 \label{21}\\
&\bbinomial{n}{1}=
\bbinomial{n}{n-1}=\frac{2 \Gamma(\frac{n}{2}+1)}{\sqrt{\pi}\Gamma(\frac{n}{2}+\frac{1}{2})}\label{22}\\
&\bbinomial{n}{-1}=
\bbinomial{n}{n+1}=\frac{1}{n+1}\bbinomial{n}{1} \label{23}\\
&\bbinomial{n}{-2k}=
\bbinomial{n}{n+2k}=0.\label{24}
\end{align}
A more explicit formula for \eqref{22} is
\begin{align}
&\bbinomial{2n}{1}=\frac{2^{2n+1}}{\pi\binom{2n}{n}}\label{21even}\\
&\bbinomial{2n+1}{1}=\frac{2n+1}{2^{2n}}\binom{2n}{n}\label{21odd}.
\end{align}
Finally we have
\begin{equation}\label{eveneven}\bbinomial{2n}{2k}=\binom{n}{k}.\end{equation}
\end{proposition}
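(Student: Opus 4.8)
The plan is to treat every identity as a direct computation from the definition \eqref{bbdef} with $\beta=1/2$, using only $\Gamma(1/2)=\sqrt\pi$ and the recursion $\Gamma(x+1)=x\Gamma(x)$ from \eqref{gamma}, and invoking the already-proved reflection formula \eqref{reflection} to obtain each symmetric right-hand member (those indexed by $n-1$, $n+1$, $n+2k$) for free. Concretely, \eqref{21} is immediate since $\bbinomial{n}{0}=\Gamma(\tfrac n2+1)/(\Gamma(1)\Gamma(\tfrac n2+1))=1$. For \eqref{22} I would substitute $k=1$ and use $\Gamma(\tfrac32)=\tfrac12\Gamma(\tfrac12)=\tfrac{\sqrt\pi}{2}$ together with $\Gamma(\tfrac{n-1}{2}+1)=\Gamma(\tfrac n2+\tfrac12)$. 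For \eqref{23} I would substitute $k=-1$, use $\Gamma(\tfrac12)=\sqrt\pi$, and peel off one step of the recursion, $\Gamma(\tfrac n2+\tfrac32)=\tfrac{n+1}{2}\Gamma(\tfrac n2+\tfrac12)$, which produces exactly the factor $\tfrac1{n+1}$ relating \eqref{23} to \eqref{22}.

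For \eqref{24} the key observation is that the reciprocal of the Gamma function vanishes at its poles: with $k\ge 1$ the denominator factor $\Gamma(\tfrac{-2k}{2}+1)=\Gamma(1-k)$ is evaluated at a non-positive integer, so $\bbinomial{n}{-2k}=0$, and \eqref{reflection} then gives the companion $\bbinomial{n}{n+2k}$. Identity \eqref{eveneven} is the cleanest of all: substituting $2n,2k$ makes every Gamma argument an integer, so $\Gamma(m+1)=m!$ turns the expression into $\tfrac{n!}{k!(n-k)!}=\binom nk$, where for $k<0$ or $k>n$ the poles of $\Gamma(k+1)$ or $\Gamma(n-k+1)$ reproduce the vanishing of the classical coefficient.

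The remaining identities \eqref{21even} and \eqref{21odd} are the ones requiring the \emph{duplication formula} \eqref{gammadup}, and this is where I expect the only genuine bookkeeping. Starting from \eqref{22}, I would specialise $n$ to $2n$ and to $2n+1$ separately. The engine is the consequence of \eqref{gammadup} that $\Gamma(m+\tfrac12)=\tfrac{(2m)!}{4^m m!}\sqrt\pi$; substituting this into $\bbinomial{2n}{1}=\tfrac{2\Gamma(n+1)}{\sqrt\pi\,\Gamma(n+\frac12)}$ collapses the ratio to $\tfrac{4^n}{\sqrt\pi\,\binom{2n}{n}}$ and yields \eqref{21even}, while for the odd case one first writes $\Gamma(n+\tfrac32)=\tfrac{2n+1}{2}\Gamma(n+\tfrac12)$ and then applies the same formula to reach \eqref{21odd}. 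The main obstacle is therefore purely computational: correctly tracking the powers of two (equivalently $4^n$ versus $2^{2n}$) and the factorials through the duplication formula, and matching the even/odd parametrisation of $n$ to the two target expressions. There is no conceptual difficulty here, only the risk of an arithmetic slip when converting $\Gamma$-ratios into central binomial coefficients.
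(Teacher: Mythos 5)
Your proposal is correct and follows essentially the same route as the paper: direct evaluation of the definition using $\Gamma(3/2)=\sqrt{\pi}/2$, one application of $\Gamma(x+1)=x\Gamma(x)$ for \eqref{23}, the vanishing of $1/\Gamma$ at non-positive integers together with the reflection formula for \eqref{24}, integer Gamma arguments for \eqref{eveneven}, and the duplication formula for \eqref{21even}--\eqref{21odd}. The only blemish is a harmless arithmetic slip in your intermediate expression, which should read $\dfrac{2\cdot 4^{n}}{\pi\binom{2n}{n}}=\dfrac{2^{2n+1}}{\pi\binom{2n}{n}}$ rather than $\dfrac{4^{n}}{\sqrt{\pi}\,\binom{2n}{n}}$, since substituting $\Gamma\!\left(n+\tfrac12\right)=\tfrac{(2n)!}{4^{n}n!}\sqrt{\pi}$ into $\tfrac{2\Gamma(n+1)}{\sqrt{\pi}\,\Gamma(n+\frac12)}$ gives $\tfrac{2\cdot 4^{n}(n!)^{2}}{\pi(2n)!}$, which is exactly \eqref{21even}.
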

\begin{proof}
Equations \eqref{21} is reported for completeness, indeed it was already stated for general $\beta$ in \eqref{21gen}. The equations \eqref{22} and \eqref{eveneven} readily follow from the definition and from the fact that $\Gamma(3/2)=\sqrt{\pi}/2$. To prove \eqref{23} it suffices to remark that
$$\bbinomial{n}{-1}=\frac{\Gamma\left(\frac{n}{2}+1\right)}{\Gamma\left(\frac{1}{2}\right)\Gamma\left(\frac{n+1}{2}+1\right)}=\frac{1}{n+1}\frac{\Gamma\left(\frac{n}{2}+1\right)}{\Gamma\left(\frac{3}{2}\right)\Gamma\left(\frac{n-1}{2}+1\right)}=\frac{1}{n+1}\bbinomial{n}{1}$$
and to apply the reflection formula to discuss the case $\left[n\atop{n+1}\right]$. The identity \eqref{24} follows by the definition  \eqref{bbdef}.
Finally, equations \eqref{21even} and \eqref{21odd} follow from 
\eqref{22} and by the duplication formula \eqref{gammadup}.\end{proof}

Now, we present some  recursive relations.
\begin{proposition}
For all $n\in\NN$ and for all $k\in \ZZ_{\geq -1}$, we have the following equalities.
\begin{enumerate}
\item[(i)]{Committee/Chair identity}
\begin{equation}\label{rec1}
\bbinomial{n+2}{k+2}=\frac{n+2}{k+2}\bbinomial{n}{k} .\end{equation}
\item[(ii)]{A recursive formula}
\begin{equation}\label{rec2}
\bbinomial{n}{k+2}=\frac{n-k}{k+2}\bbinomial{n}{k}.\end{equation}
\item[(iii)]{A difference formula}
\begin{equation}\label{diff}
\bbinomial{n}{k+2}-\bbinomial{n}{k}=\frac{n-2-2k}{n+2}\bbinomial{n+2}{k+2}.\end{equation}
\end{enumerate}
\end{proposition}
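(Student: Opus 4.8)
The entire proposition rests on a single tool, namely the functional equation $\Gamma(x+1)=x\Gamma(x)$ recalled in \eqref{gamma}. The plan is to observe that replacing $n$ by $n+2$ or $k$ by $k+2$ shifts the halved arguments $\tfrac{n}{2}$, $\tfrac{k}{2}$, $\tfrac{n-k}{2}$ appearing in the definition of $\bbinomial{n}{k}$ by exactly one unit, which is precisely the increment governed by the Gamma recursion. Thus each identity should reduce, after expanding the definition, to cancelling a pair of consecutive Gamma values and reading off the leftover polynomial factor. I expect (i) and (ii) to be direct computations of this kind, while (iii) should follow purely algebraically from the first two, with no further Gamma manipulation.

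For \eqref{rec1} I would write $\bbinomial{n+2}{k+2}=\dfrac{\Gamma(\frac{n}{2}+2)}{\Gamma(\frac{k}{2}+2)\,\Gamma(\frac{n-k}{2}+1)}$ and apply $\Gamma(\frac{n}{2}+2)=(\frac{n}{2}+1)\Gamma(\frac{n}{2}+1)$ in the numerator and $\Gamma(\frac{k}{2}+2)=(\frac{k}{2}+1)\Gamma(\frac{k}{2}+1)$ in the denominator, the factor $\Gamma(\frac{n-k}{2}+1)$ being untouched; the surviving prefactor is $\frac{n/2+1}{k/2+1}=\frac{n+2}{k+2}$, yielding $\frac{n+2}{k+2}\bbinomial{n}{k}$. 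For \eqref{rec2} the mechanism is the same but the shift now lands on the second denominator factor as well: starting from $\bbinomial{n}{k+2}=\dfrac{\Gamma(\frac{n}{2}+1)}{\Gamma(\frac{k}{2}+2)\,\Gamma(\frac{n-k}{2})}$, I would use $\Gamma(\frac{k}{2}+2)=(\frac{k}{2}+1)\Gamma(\frac{k}{2}+1)$ and $\Gamma(\frac{n-k}{2}+1)=\frac{n-k}{2}\Gamma(\frac{n-k}{2})$ to convert both denominators into those of $\bbinomial{n}{k}$, leaving the factor $\frac{(n-k)/2}{k/2+1}=\frac{n-k}{k+2}$.

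For the difference formula (iii) no new computation is needed. Using \eqref{rec2} I would factor
\begin{align*}
\bbinomial{n}{k+2}-\bbinomial{n}{k}=\left(\frac{n-k}{k+2}-1\right)\bbinomial{n}{k}=\frac{n-2-2k}{k+2}\bbinomial{n}{k},
\end{align*}
and then rewrite \eqref{rec1} as $\bbinomial{n}{k}=\frac{k+2}{n+2}\bbinomial{n+2}{k+2}$; substituting cancels the $k+2$ and produces the claimed factor $\frac{n-2-2k}{n+2}$. The only genuine care required, and the closest thing to an obstacle, is the bookkeeping at the boundary and degenerate values: the Gamma recursion is being applied at regular points, so for $k$ near $-1$ and for configurations where one of the coefficients vanishes by the convention in \eqref{bbdef} (e.g. when $\tfrac{n-k}{2}$ reaches a non-positive integer, cf. \eqref{24}), I would verify the identities directly against that convention rather than through the recursion, checking that both sides vanish consistently.
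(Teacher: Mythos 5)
Your proposal is correct and takes essentially the same route as the paper: identities (i) and (ii) are proved by expanding the definition and applying $\Gamma(x+1)=x\Gamma(x)$ to shift the halved arguments, and (iii) is deduced algebraically by combining (i) and (ii). Your attention to the degenerate cases (where a coefficient vanishes by the convention in \eqref{bbdef}) mirrors the paper's observation that the identities then reduce to $0=0$.
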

\begin{proof} The Committee/Chair identity \eqref{rec1}  is a consequence of \eqref{gamma}, indeed one has for $1/2(n-k)\not\in\ZZ_{\leq -1}$
\begin{align*}\bbinomial{n+2}{k+2}&=\frac{\Gamma\left(\frac{n}{2}+2\right)}{\Gamma\left(\frac{k}{2}+2\right)\Gamma\left(\frac{1}{2}(n-k)+1\right)}\\
&=\frac{n/2+1}{k/2+1}\frac{\Gamma\left(\frac{n}{2}+1\right)}{\Gamma\left(\frac{k}{2}+1\right)\Gamma\left(\frac{1}{2}(n-k)+1\right)}=\frac{n+2}{k+2}\bbinomial{n}{k}.
\end{align*}
If otherwise $1/2(n-k)\in\ZZ_{\leq -1}$then $k\geq n+2$ and, by definition \eqref{bbdef}, \eqref{rec1} reduces to the $0=0$ indentity.  
Similarly, one gets \eqref{rec2} from
\begin{align*}
\bbinomial{n}{k+2}=\frac{\Gamma(\frac{n}{2}+1)}{\Gamma(\frac{k}{2}+2)\Gamma(\frac{1}{2}(n-k))}=\frac{(n-k)/2}{k/2+1}\bbinomial{n}{k}.
\end{align*}
if $1/2(n-k)\not\in\ZZ_{\leq -1}$ and from the trivial identity $0=0$ otherwise.
The Difference formula readily follows replacing $\bbinomial{n+2}{k+2}$ in \eqref{diff} with the right handside of \eqref{rec2}.
\end{proof}

\begin{table}
{\small \begin{tabular}{|l|c|c|}\hline
Formula&$\beta=1$&$\beta>0$\\
\hline&&\\
Reflection&$\displaystyle{\binom{n}{k}=\binom{n}{n-k}}$&$\bbinomial{n}{k}=\bbinomial{n}{n-k}$\\&&\\
Cancelation Identity &$\displaystyle{\binom{n}{h}\binom{n-h}{k}=\binom{n}{k}\binom{n-k}{h}}$&$\bbinomial{n}{h}\bbinomial{n-h}{k}=\bbinomial{n}{k}\bbinomial{n-k}{h}$\\
($h,k\leq n$)&&\\
 &&$\bbinomial{n}{1}\bbinomial{n-1}{k}=\bbinomial{n}{k}\bbinomial{n-k}{1}$\\
&&\\
\hline
\end{tabular}}
\caption{\normalsize Exact formulas for $n\in\NN$, $k,h\in\ZZ$, $\beta>0$. }
\end{table}
\begin{table}[h!]
{\small\centering
\begin{tabular}{|l|c|c|}
\hline&$\beta=1$&$\beta=1/2$\\
\hline&&\\
Value for $k=2$&$\displaystyle{\binom{n}{2}=\frac{n(n-1)}{2}}$\qquad&$\bbinomial{n}{2}=\dfrac{n}{2}$
\\&&\\\hline&&\\
Value for $k=1$&$\displaystyle{\binom{n}{1}=n}$\qquad&
$\bbinomial{2n}{1}=\displaystyle{\frac{2^{2n+1}}{\pi\binom{2n}{n}}}$\\&&\\
&&$\bbinomial{2n+1}{1}=\dfrac{2n+1}{2^{2n}}\displaystyle{\binom{2n}{n}}$\\&&\\\hline&&\\
Value for $k=0$&$\displaystyle{\binom{n}{0}=\binom{n}{n}}=1$\qquad&$\bbinomial{n}{0}=\bbinomial{n}{n}=1$\\&&\\\hline&&\\
Values for $k>n$ &$\displaystyle{\binom{n}{-1}=\binom{n}{n+1}}:=0$\qquad&$\bbinomial{n}{-2}=\bbinomial{n}{n+2}:=0$\\&&\\&&$\bbinomial{n}{-1}=\bbinomial{n}{n+1}=\dfrac{1}{n+1}\bbinomial{n}{1}$\\&&\\
\hline&&\\
{Committee/Chair}&$\displaystyle{\binom{n+1}{k+1}=\frac{n+1}{k+1}\binom{n}{k}}$&$\bbinomial{n+2}{k+2}=\dfrac{n+2}{k+2}\bbinomial{n}{k}$\\ identity&&\\&&\\

Recursion  &$\displaystyle{\binom{n}{k+1}=\frac{n-k}{k+1}\binom{n}{k}}$&$\bbinomial{n}{k+2}=\dfrac{n-k}{k+2}\bbinomial{n}{k}.$\\&&\\
Difference &$\displaystyle{\binom{n}{k+1}-\binom{n}{k}=\frac{n-k-1}{n+1}\binom{n+1}{k+1}}$&$\bbinomial{n}{k+2}-\bbinomial{n}{k}=\dfrac{n-2k-2}{n}\bbinomial{n+2}{k+2}$\\&&\\\hline&&\\
A  sum &$\displaystyle{\binom{n}{2}+\binom{n-1}{2}=(n-1)^2}$&
$\bbinomial{n}{2}+\bbinomial{n-2}{2}=(n-1)=2\bbinomial{n-1}{2}$\\&&\\\hline&&\\

Pascal's rule&$\displaystyle{\binom{n}{k}+\binom{n}{k+1}=\binom{n+1}{k+1}}$&$\bbinomial{n}{k}+
\bbinomial{n}{k+2}
=\bbinomial{n+2}{k+2}$\\&&\\\hline
\end{tabular}}
\caption{\normalsize Exact formulas for $n,k\in\NN$. The identities on the right-most column  also hold for $k=-1$.}
\end{table}

We now establish a recursive relation 
for $1/2$-binomial coefficients generalizing the  celebrated Pascal's rule: 	
$$\binom{n}{k}+\binom{n}{k+1}=\binom{n+1}{k+1}\qquad \forall k,n\in \NN.$$
 \begin{proposition}[Pascal's rule]\label{p1}
For all $n\in \NN$ and for all $k\in \ZZ_{\geq -1}$
\begin{equation}\label{tartagliagen}
\bbinomial{n}{k}+
\bbinomial{n}{k+2}
=\bbinomial{n+2}{k+2}.
\end{equation}
\end{proposition}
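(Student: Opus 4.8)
The plan is to derive Pascal's rule directly from the two recursive relations already established, namely the Committee/Chair identity \eqref{rec1} and the recursion \eqref{rec2}. Both are stated for all $n\in\NN$ and $k\in\ZZ_{\geq -1}$, which is exactly the range in which we want to prove \eqref{tartagliagen}, so no separate treatment of the boundary value $k=-1$ will be required.

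First I would factor $\bbinomial{n}{k}$ out of the left-hand side. Using \eqref{rec2} to rewrite $\bbinomial{n}{k+2}=\frac{n-k}{k+2}\bbinomial{n}{k}$, the sum collapses to
$$\bbinomial{n}{k}+\bbinomial{n}{k+2}=\left(1+\frac{n-k}{k+2}\right)\bbinomial{n}{k}=\frac{n+2}{k+2}\bbinomial{n}{k}.$$
The next step is simply to recognize the resulting expression as $\bbinomial{n+2}{k+2}$ via the Committee/Chair identity \eqref{rec1}, which states precisely that $\bbinomial{n+2}{k+2}=\frac{n+2}{k+2}\bbinomial{n}{k}$. Chaining these two substitutions closes the argument in one line.

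The only point that deserves a word of care is the degenerate regime where $\tfrac12(n-k)\in\ZZ_{\leq -1}$ and the coefficients vanish by definition \eqref{bbdef}. I do not expect this to be a genuine obstacle: both \eqref{rec1} and \eqref{rec2} were already shown to reduce to the trivial identity $0=0$ there, so the computation above stays valid term by term and \eqref{tartagliagen} holds in that range as well. Accordingly, I anticipate the proof to be a short, purely algebraic combination of the two recursions; the substantive work has in effect been absorbed into the earlier derivation of \eqref{rec1} and \eqref{rec2} from the functional equation $\Gamma(x+1)=x\Gamma(x)$. As an alternative one could expand all three coefficients in terms of Gamma functions and apply $\Gamma(x+1)=x\Gamma(x)$ directly, but the route through \eqref{rec1} and \eqref{rec2} is shorter and reuses results already in hand.
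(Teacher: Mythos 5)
Your proposal is correct and follows essentially the same route as the paper: the paper's proof likewise applies \eqref{rec2} to collapse the left-hand side to $\frac{n+2}{k+2}\bbinomial{n}{k}$ and then identifies this with $\bbinomial{n+2}{k+2}$ via \eqref{rec1}. Your extra remark about the degenerate case $\tfrac12(n-k)\in\ZZ_{\leq -1}$ is sound and, as you note, already absorbed into the proofs of \eqref{rec1} and \eqref{rec2}.
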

\begin{proof}
By \eqref{rec2} and \eqref{rec1} we readily have 
\begin{align*}
\bbinomial{n}{k}+
\bbinomial{n}{k+2}=\frac{n+2}{k+2}\bbinomial{n}{k}=\bbinomial{n+2}{k+2}
\end{align*}
and the proof is complete.
	\end{proof}

%

\section{Generating functions}\label{S3}
We consider the generating function
$$\varphi_n(t):=\sum_{k=0}^\infty \bbinomial{n}{k}t^{k}.$$
Preliminarly, we have the following result
\begin{proposition}
\begin{equation}\label{phi0}\varphi_0(t):=\sum_{k=0}^\infty \bbinomial{0}{k}t^{k}=\frac{2}{\pi}\arctan(t) \quad t\in(-1,1].\end{equation}
\begin{equation}\begin{split}
\varphi_1(t)&:=1+t+\sum_{k=0}^\infty \bbinomial{1}{k}t^{k}=1+t+\frac{t^2}{2}\sum_{k=0}^\infty (-1)^k C_k 2^{-2k} t^{2k}\\
&=t+\sqrt{1+t^2}\hskip3cm t\in[-1,1].\label{phi1}\end{split}\end{equation}
where $C_k:=\frac{1}{k+1}\binom{2k}{k}$ is the $k$-th Catalan number.
\end{proposition}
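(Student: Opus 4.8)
The plan is to treat the two generating functions separately, in each case splitting the defining series by the parity of $k$ and exploiting the fact that, by \eqref{bbdef}, the coefficient $\bbinomial{n}{k}$ vanishes as soon as $\tfrac12(n-k)\in\ZZ_{\le-1}$. In both cases the surviving coefficients are ratios of Gamma functions at (possibly negative) half-integers, which I would reduce to elementary quantities via the recursion \eqref{gamma} and Euler's reflection formula.

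\emph{The function $\varphi_0$.} For $n=0$ every even index $k=2m$ with $m\ge1$ satisfies $\tfrac12(0-k)=-m\in\ZZ_{\le-1}$, so $\bbinomial{0}{2m}=0$ and the nontrivial content of the series sits in the odd indices. The heart of the argument is to evaluate, for $k=2m+1$,
$$\bbinomial{0}{2m+1}=\frac{1}{\Gamma\!\left(m+\tfrac32\right)\Gamma\!\left(\tfrac12-m\right)}.$$
I would factor $\Gamma(m+\tfrac32)=(m+\tfrac12)\Gamma(m+\tfrac12)$ using \eqref{gamma} and then apply the reflection formula in the form $\Gamma(\tfrac12+m)\Gamma(\tfrac12-m)=\pi/\cos(\pi m)=(-1)^m\pi$; the product collapses to $\bbinomial{0}{2m+1}=\frac{2(-1)^m}{\pi(2m+1)}$. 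These are precisely the Maclaurin coefficients of $\frac{2}{\pi}\arctan t$, so summing the odd part produces $\frac{2}{\pi}\arctan t$. The decay $O(1/m)$ gives radius of convergence $1$, and the Leibniz criterion together with Abel's theorem secures the value at the endpoint $t=1$.

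\emph{The function $\varphi_1$.} Here $\bbinomial{1}{0}=\bbinomial{1}{1}=1$ (the extracted $1+t$), every odd index $k=2j+1$ with $j\ge1$ gives $\tfrac12(1-k)=-j\in\ZZ_{\le-1}$ and hence vanishes, and only the even indices $k=2m+2$ remain. The key reduction is to recognise a Catalan number in $\bbinomial{1}{2m+2}$: combining $\Gamma(\tfrac32)=\tfrac{\sqrt\pi}{2}$, $\Gamma(m+2)=(m+1)!$ and the reflection-formula value of $\Gamma(\tfrac12-m)$ with the identities $(2m-1)!!=(2m)!/(2^m m!)$ and $C_m=(2m)!/((m+1)!\,m!)$, one finds $\bbinomial{1}{2m+2}=\frac{(-1)^m C_m}{2^{2m+1}}$. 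Hence $\sum_{m\ge0}\bbinomial{1}{2m+2}t^{2m+2}=\frac{t^2}{2}\sum_{m\ge0}C_m\left(-\tfrac{t^2}{4}\right)^m$, and inserting the Catalan generating function $\sum_{m\ge0}C_mx^m=\frac{1-\sqrt{1-4x}}{2x}$ at $x=-t^2/4$ (admissible for $|t|\le1$, where $|x|\le\tfrac14$) evaluates this tail to $\sqrt{1+t^2}-1$. Adding the extracted $1+t$ yields $\varphi_1(t)=t+\sqrt{1+t^2}$.

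The step I expect to be the main obstacle, in both parts, is the accurate evaluation of the Gamma factor at the negative half-integer argument $\Gamma(\tfrac12-m)$: the recursion \eqref{gamma} only walks the argument back to $\Gamma(\tfrac12)=\sqrt\pi$ while producing an alternating sign and a double-factorial denominator, so the reflection formula is the cleanest device, and one must track carefully both the sign $(-1)^m$ and the factor $\tfrac12$ entering through $\Gamma(\tfrac32)$. A secondary subtlety is rigour on the boundary of the disc of convergence: for $\varphi_1$ the asymptotics $C_m\sim 4^m/(\sqrt\pi\,m^{3/2})$ make the coefficients decay like $m^{-3/2}$, giving absolute convergence on all of $[-1,1]$, whereas for $\varphi_0$ only conditional convergence is available at $t=1$, so the closed form there must be justified by Abel summation rather than by absolute convergence.
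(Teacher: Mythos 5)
Your route is the same as the paper's: evaluate the surviving coefficients via Euler's reflection formula, then recognise the arctangent Maclaurin series for $\varphi_0$ and the Catalan generating function at $x=-t^2/4$ for $\varphi_1$. The $\varphi_1$ half is correct and complete: your coefficient $\bbinomial{1}{2m+2}=(-1)^m C_m 2^{-2m-1}$ agrees, after an index shift, with the paper's $\bbinomial{1}{2k}=\frac{(-1)^{k+1}}{2k-1}\binom{2k}{k}2^{-2k}$, and your attention to convergence (absolute convergence on $[-1,1]$ from $C_m\sim 4^m\pi^{-1/2}m^{-3/2}$, Abel's theorem at $t=1$ for $\varphi_0$) is more careful than anything in the paper. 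You also, sensibly, read the garbled definition of $\varphi_1$ in the statement as the full series with the $k=0,1$ terms extracted.

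The genuine problem is the constant term in $\varphi_0$. You correctly observe that $\bbinomial{0}{2m}=0$ only for $m\ge 1$, and you correctly compute $\bbinomial{0}{2m+1}=\frac{2(-1)^m}{\pi(2m+1)}$; but you then conclude by ``summing the odd part'' and never add back the term you yourself excluded, namely $\bbinomial{0}{0}=\Gamma(1)/(\Gamma(1)\Gamma(1))=1$. The series therefore equals $1+\frac{2}{\pi}\arctan(t)$, not $\frac{2}{\pi}\arctan(t)$, so your argument as written does not prove the displayed identity --- indeed nothing can, because the display is a misprint. This is exactly where the paper's own proof differs from yours: its final line reads $\varphi_0(t)=1+\frac{2}{\pi}\sum_{k\ge 0}\frac{(-1)^k}{2k+1}t^{2k+1}=1+\frac{2}{\pi}\arctan(t)$, contradicting the proposition's right-hand side; and it is this corrected value that is used downstream (in the proof of Theorem \ref{thm1}, one needs $\varphi_0(1)=3/2$, not $1/2$, to get the limit $2$ and hence the asymptotic \eqref{sumbin}). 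So your computation reproduces the paper's up to the last step, but by silently dropping the $k=0$ term it ends up endorsing the misprint rather than catching it. The fix is one line: add $\bbinomial{0}{0}=1$ to the odd-part sum and state the conclusion as $\varphi_0(t)=1+\frac{2}{\pi}\arctan(t)$.
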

\begin{proof}
To prove \eqref{phi0}, recall the reflection formula
$$\frac{1}{\Gamma(1+z)\Gamma(1-z)}=\frac{\sin(\pi z)}{z\pi}\qquad z\not\in \ZZ.$$
Applying above identity to $z=k+1/2$, we have by definition \eqref{bbdef}
$$\bbinomial{0}{0}=1;\quad \bbinomial{0}{2k}=0;\quad\bbinomial{0}{1}=\frac{2}{\pi}; \bbinomial{0}{2k+1}=\frac{2}{\pi}\frac{(-1)^k}{2k+1}, \qquad k\in\NN^*$$
from which we deduce
$$\varphi_0(t)=1+\frac{2}{\pi}\sum_{k=0}^\infty \frac{(-1)^k}{2k+1}t^{2k+1}=1+\frac{2}{\pi}\arctan(t)\qquad \text{for } t\in(-1,1].$$
Similarly, we have
$$\bbinomial{1}{2k}=\frac{(-1)^{k+1}}{2k-1}\binom{2k}{k}2^{-2k} \quad k\in\NN;\quad \bbinomial{1}{1}=1;\quad \bbinomial{1}{2k+1}=0, \quad k\in\NN^*$$
and, consenquently, 
\begin{align*}\varphi_1(t)&=1+t+\sum_{k=1}^\infty \frac{(-1)^{k+1}}{2k-1}\binom{2k}{k}2^{-2k}t^{2k}\\&=1+t+\sum_{k=0}^\infty \frac{(-1)^{k}}{2k+1}\binom{2(k+1)}{k+1}2^{-2k-2}t^{2k+2}\\
&=1+t+\frac{t^2}{2}\sum_{k=0}^\infty \frac{(-1)^{k}}{k+1}\binom{2k}{k}2^{-2k}t^{2k}=t+\sqrt{1+t^2}\qquad \text{for } t\in[-1,1].\end{align*}
The latter equality follows by evaluating the generating function of the Catalan numbers $C(x):=\frac{1-\sqrt{1-4x}}{2x}$ (see for instance \cite{catalangen}) in $-t^2/4$.  
\end{proof}
We are now in position to investigate $\varphi_n(t)$ with general $n$. Our starting point is the following recursive relation
\begin{lemma}\label{lemmaphi}
For $n\in\NN$
\begin{equation}\label{recphi}
\varphi_{n+2}(t)=(1+t^2)\varphi_n(t)+\frac{1}{n+1}\bbinomial{n}{1}t.
\end{equation}
\end{lemma}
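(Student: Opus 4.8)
The plan is to establish the recursion by relating the generating function $\varphi_{n+2}$ to $\varphi_n$ term-by-term, exploiting the Pascal-type rule proved in Proposition \ref{p1}. Recall that equation \eqref{tartagliagen} gives, for every $k\geq -1$, the identity $\bbinomial{n+2}{k+2}=\bbinomial{n}{k}+\bbinomial{n}{k+2}$. The idea is to write $\varphi_{n+2}(t)=\sum_{k=0}^\infty \bbinomial{n+2}{k}t^k$ and isolate the terms where the Pascal rule applies, namely $k\geq 1$ (so that $k=j+2$ with $j\geq -1$), handling the two lowest-order terms $k=0$ and $k=1$ separately since they fall outside the clean shifted range.

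First I would split off the initial terms: using \eqref{21} we have $\bbinomial{n+2}{0}=1$, and the coefficient $\bbinomial{n+2}{1}$ must be kept explicit for now. For the remaining sum $\sum_{k=2}^\infty \bbinomial{n+2}{k}t^k$, I substitute $k=j+2$ to obtain $\sum_{j=0}^\infty \bbinomial{n+2}{j+2}t^{j+2}$, and then apply Pascal's rule \eqref{tartagliagen} to rewrite $\bbinomial{n+2}{j+2}=\bbinomial{n}{j}+\bbinomial{n}{j+2}$. This converts the tail into $t^2\sum_{j=0}^\infty \bbinomial{n}{j}t^j + t^2\sum_{j=0}^\infty \bbinomial{n}{j+2}t^j$. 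The first of these two sums is exactly $t^2\varphi_n(t)$, which is precisely the source of the factor $t^2$ multiplying $\varphi_n$ in the claimed recursion.

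The second sum is $t^2\sum_{j=0}^\infty \bbinomial{n}{j+2}t^j=\sum_{j=0}^\infty\bbinomial{n}{j+2}t^{j+2}=\sum_{m=2}^\infty \bbinomial{n}{m}t^m$, which is $\varphi_n(t)$ with its two lowest terms removed, i.e.\ $\varphi_n(t)-\bbinomial{n}{0}-\bbinomial{n}{1}t=\varphi_n(t)-1-\bbinomial{n}{1}t$. Collecting all pieces, I would combine the leading terms $\bbinomial{n+2}{0}+\bbinomial{n+2}{1}t=1+\bbinomial{n+2}{1}t$ with the contributions above. The coefficient of $\varphi_n(t)$ then assembles as $t^2+1$, giving the advertised $(1+t^2)\varphi_n(t)$, while the remaining explicit terms must reduce to $\tfrac{1}{n+1}\bbinomial{n}{1}t$.

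I expect the main obstacle to be the bookkeeping of the low-order correction terms, specifically verifying that the leftover constant and linear terms collapse exactly to $\tfrac{1}{n+1}\bbinomial{n}{1}t$. Concretely, after cancellation the constant terms should vanish, and the linear-in-$t$ terms reduce to $\bbinomial{n+2}{1}t-\bbinomial{n}{1}t$. To finish, I would invoke the recursion \eqref{rec1} (Committee/Chair) in the form $\bbinomial{n+2}{1}=\tfrac{n+2}{1}\bbinomial{n}{-1}$ together with \eqref{23}, which states $\bbinomial{n}{-1}=\tfrac{1}{n+1}\bbinomial{n}{1}$; this yields $\bbinomial{n+2}{1}=\tfrac{n+2}{n+1}\bbinomial{n}{1}$, so that $\bbinomial{n+2}{1}-\bbinomial{n}{1}=\tfrac{1}{n+1}\bbinomial{n}{1}$, exactly matching the claimed remainder. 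Care is needed here because the Pascal rule \eqref{tartagliagen} is only valid for $k\geq -1$, which is precisely why the $k=0$ and $k=1$ terms of $\varphi_{n+2}$ had to be extracted by hand rather than folded into the shifted Pascal sum.
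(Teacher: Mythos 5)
Your proof is correct and follows essentially the same route as the paper's: split off the $k=0,1$ terms, shift the index, apply Pascal's rule \eqref{tartagliagen}, and recombine so that the remainder is $\bigl(\bbinomial{n+2}{1}-\bbinomial{n}{1}\bigr)t$. The only cosmetic difference is that you evaluate this remainder via the Committee/Chair identity \eqref{rec1} at $k=-1$ together with \eqref{23}, whereas the paper reads it off from Pascal's rule at $k=-1$ and the same identity \eqref{23}; both are equivalent.
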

\begin{proof}
We have by Proposition \ref{p1} (see also Table 1 for the values of $\left[n+2\atop 0\right]$ and $\left[n\atop -1\right]$)
\begin{align*}
\varphi_{n+2}(t):&=\sum_{k=0}^\infty \bbinomial{n+2}{k}t^{k}=1+\bbinomial{n+2}{1}t+\sum_{k=2}^\infty \bbinomial{n+2}{k}t^{k}\\
&=1+\bbinomial{n+2}{1}t+\sum_{k=0}^\infty \bbinomial{n+2}{k+2}t^{k+2}\\
&=1+\bbinomial{n+2}{1}t+\sum_{k=0}^\infty \left(\bbinomial{n}{k}+\bbinomial{n}{k+2}\right)t^{k+2}\\
&=(t^2+1)\varphi_n(t)+\left(\bbinomial{n+2}{1}-\bbinomial{n}{1}\right)t\\
&=(t^2+1)\varphi_n(t)+\frac{1}{n+1}\bbinomial{n}{1}t.
\end{align*}
\end{proof}
Our first main result deals with the asymptotic behaviour of $\varphi_n(t)$ (as $n\to+\infty$) -- see also Figure \ref{figbin}.
\begin{theorem}\label{thm1}For all $n\in\NN$
$$\varphi_{2n}(t)=(1+t^2)^n\left(1+\frac{2}{\pi}\arctan(t)+\frac{t}{\pi}\sum_{k=1}^{n}\frac{1}{k}\frac{2^{2k}}{\binom{2k}{k}}(1+t^2)^{-k}\right);$$
$$\varphi_{2n+1}(t)=(1+t^2)^n\left(\sqrt{1+t^2}+t\sum_{k=0}^{n}\frac{1}{2^{2k}}\binom{2k}{k}(1+t^2)^{-k}\right).$$
Moreover, for all fixed $t\in(-1,1]$
\begin{equation}\label{phiasymeven}\varphi_{2n}(t)\sim(1+t^2)^n\left(1+\frac{4}{\pi}\arctan(t)\right)\qquad \text{as }n\to+\infty;\end{equation}
\begin{equation}\label{phiasymodd}\varphi_{2n+1}(t)\sim(1+t^2)^n\left(1+\text{sign}(t)\right)\sqrt{1+t^2}\qquad \text{as }n\to+\infty.\end{equation}

Finally
\begin{equation}\label{sumbin}
\sum_{k=0}^\infty \bbinomial{n}{k}\sim 2^{n/2+1} \qquad \text{as }n\to+\infty.\end{equation}
\end{theorem}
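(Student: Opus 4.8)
The plan is to recognize that the sum in \eqref{sumbin} is nothing but the generating function evaluated at $t=1$, namely
$$\sum_{k=0}^\infty \bbinomial{n}{k}=\varphi_n(1).$$
Since $t=1$ lies in the half-open interval $(-1,1]$ on which the asymptotic relations \eqref{phiasymeven} and \eqref{phiasymodd} are valid, the whole claim reduces to substituting $t=1$ into those two relations and verifying that the even- and odd-indexed subsequences yield the same expression.

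First I would treat the even indices. Setting $n=2m$ and $t=1$ in \eqref{phiasymeven} and using $\arctan(1)=\pi/4$, the bracketed factor collapses to $1+\frac{4}{\pi}\cdot\frac{\pi}{4}=2$, so that
$$\varphi_{2m}(1)\sim 2^m\cdot 2=2^{m+1}=2^{2m/2+1}.$$
For the odd indices I would set $n=2m+1$ and $t=1$ in \eqref{phiasymodd}; since $\text{sign}(1)=1$ and $\sqrt{1+1}=\sqrt{2}$, this gives
$$\varphi_{2m+1}(1)\sim 2^m\cdot(1+1)\cdot\sqrt{2}=2^{m+3/2}=2^{(2m+1)/2+1}.$$
In each case the right-hand side is exactly $2^{n/2+1}$ for the corresponding $n$, so both subsequences share the same leading asymptotics.

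The only point requiring care --- and the real content of the argument --- is the numerical coincidence at the endpoint $t=1$. In the even case the arctangent term attains the value $\frac{4}{\pi}\arctan(1)=1$, promoting the bracket to $2$, while in the odd case the factor $(1+\text{sign}(1))\sqrt{2}$ equals $2^{3/2}$; these are precisely the constants for which the two parity-separated estimates merge into the single parity-free expression $2^{n/2+1}$, the extra half-power $2^{1/2}$ in the odd case accounting exactly for the half-integer shift of $n/2$ when $n$ is odd. Since both subsequences are asymptotically equivalent to $2^{n/2+1}$ as functions of their own index $n$, the full sequence is as well, which establishes \eqref{sumbin}. No genuine analytic obstacle remains once \eqref{phiasymeven}--\eqref{phiasymodd} are available: the work lies entirely in the evaluation at $t=1$ and in reconciling the two parities into one formula.
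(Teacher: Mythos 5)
Your argument covers only the last of the three claims in the theorem. What you prove---that \eqref{sumbin} follows from \eqref{phiasymeven} and \eqref{phiasymodd} by setting $t=1$, using $\arctan(1)=\pi/4$ in the even case and $(1+\mathrm{sign}(1))\sqrt{2}=2^{3/2}$ in the odd case, and checking that both parities collapse to the single expression $2^{n/2+1}$---is correct, and it is exactly how the paper disposes of \eqref{sumbin} (one line: ``choosing $t=1$ in \eqref{phiasymeven} and \eqref{phiasymodd}''). But you take the two exact formulas for $\varphi_{2n}(t)$, $\varphi_{2n+1}(t)$ and the asymptotic relations \eqref{phiasymeven}--\eqref{phiasymodd} as given, and these are the actual content of the theorem; your proposal proves the easy corollary and omits the body.

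Concretely, the missing work is the following. First, one iterates the recursion of Lemma \ref{lemmaphi}, namely $\varphi_{n+2}(t)=(1+t^2)\varphi_n(t)+\frac{1}{n+1}\bbinomial{n}{1}t$, starting from the closed forms \eqref{phi0} and \eqref{phi1} for $\varphi_0$ and $\varphi_1$, and substitutes the explicit values \eqref{21even} and \eqref{21odd} of $\bbinomial{2k}{1}$ and $\bbinomial{2k+1}{1}$; after reindexing this yields the two exact formulas with the sums $\sum_{k=1}^{n}\frac{1}{k}\frac{2^{2k}}{\binom{2k}{k}}(1+t^2)^{-k}$ and $\sum_{k=0}^{n}\frac{1}{2^{2k}}\binom{2k}{k}(1+t^2)^{-k}$. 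Second, to pass from the exact formulas to \eqref{phiasymeven}--\eqref{phiasymodd} one needs the closed-form generating functions of the (inverse) central binomial coefficients,
$$W(x)=\sum_{k=1}^{\infty}\frac{2^{2k}}{k\binom{2k}{k}}x^k=2\sqrt{\tfrac{x}{1-x}}\arctan\sqrt{\tfrac{x}{1-x}}, \qquad Z(x)=\sum_{k=0}^{\infty}\frac{1}{2^{2k}}\binom{2k}{k}x^k=\frac{1}{\sqrt{1-x}},$$
evaluated at $x=(1+t^2)^{-1}$, together with the (nontrivial) simplifications $\varphi_0(t)+\frac{t}{\pi}W\bigl((1+t^2)^{-1}\bigr)=1+\frac{4}{\pi}\arctan(t)$ and $\varphi_1(t)+t\bigl(Z\bigl((1+t^2)^{-1}\bigr)-1\bigr)=(1+\mathrm{sign}(t))\sqrt{1+t^2}$, including the convergence of $W$ at the endpoint needed for $t=1$. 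None of this appears in your proposal, so as it stands it is not a proof of the stated theorem but only of its final consequence.
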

\begin{proof}
For all $n\in\NN$, using the recursive relation \eqref{recphi} and the identity \eqref{21even} we obtain 
\begin{align*}
\varphi_{2n}(t)&=(1+t^2)^n\left(\varphi_0(t)+t\sum_{k=0}^{n-1}\frac{1}{2k+1}\bbinomial{2k}{1}(1+t^2)^{-k-1}\right)\\
&=(1+t^2)^n\left(\varphi_0(t)+t\sum_{k=0}^{n-1}\frac{1}{2k+1}\frac{2^{2k+1}}{\pi\binom{2k}{k}}(1+t^2)^{-k-1}\right)\\
&=(1+t^2)^n\left(\varphi_0(t)+t\sum_{k=0}^{n-1}\frac{1}{k+1}\frac{2^{2(k+1)}}{\pi\binom{2(k+1)}{k+1}}(1+t^2)^{-k-1}\right)\\
&=(1+t^2)^n\left(\varphi_0(t)+\frac{t}{\pi}\sum_{k=1}^{n}\frac{1}{k}\frac{2^{2k}}{\binom{2k}{k}}(1+t^2)^{-k}\right).
\end{align*}
Now, we recall from \cite{lasalvezza} that the generating function of the sequence $a_n:=\frac{2^{2k}}{k\binom{2k}{k}}$ is convergent in $[-1,1)$ and it reads
 $$W(x):=\sum_{k=1}^\infty \frac{2^{2k}}{k\binom{2k}{k}}x^k=2\sqrt{\frac{x}{1-x}}\arctan{\sqrt{\frac{x}{1-x}}}$$
Therefore for every $t\in(-1,1]$
\begin{align*}\lim_{n\to+\infty}\frac{\varphi_{2n}(t)}{(1+t^2)^n}=&\varphi_0(t)+\frac{t}{\pi}\sum_{k=1}^\infty \frac{1}{k}\frac{2^{2k}}{\binom{2k}{k}}(1+t^2)^{-k}\\=&\varphi_0(t)+\frac{t}{\pi}W((1+t^2)^{-1})=
1+\frac{4}{\pi}\arctan(t).\end{align*}  
Similarly we have
\begin{align*}
\varphi_{2n+1}(t)&=(1+t^2)^n\left(\varphi_1(t)+t\sum_{k=0}^{n-1}\frac{1}{2k+2}\bbinomial{2k+1}{1}(1+t^2)^{-k-1}\right)\\
&=(1+t^2)^n\left(\varphi_1(t)+t\sum_{k=0}^{n-1}\frac{2k+1}{2^{2k}(2k+2)}\binom{2k}{k}(1+t^2)^{-k-1}\right)\\
&=(1+t^2)^n\left(\varphi_1(t)+t\sum_{k=1}^{n}\frac{1}{2^{2k}}\binom{2k}{k}(1+t^2)^{-k}\right).
\end{align*}
Recalling that the generating function of the sequence $b_n:=\frac{1}{2^{2k}}{\binom{2k}{k}}$ is convergent in $(-1,1)$ and it reads
$$Z(x):=\sum_{k=0}^\infty \frac{1}{2^{2k}}{\binom{2k}{k}}x^k=\frac{1}{\sqrt{1-x}}.$$
Therefore for every $t\in(-1,1]$
\begin{align*}
\lim_{n\to+\infty}\frac{\varphi_{2n+1}(t)}{(1+t^2)^n}&=\varphi_1(t)+t\sum_{k=1}^\infty \frac{1}{2^{2k}}{\binom{2k}{k}}(1+t^2)^{-k}\\&=\varphi_1(t)+t (Z((1+t^2)^{-1})-1)=
(1+\text{sign}(t))\sqrt{1+t^2}.\end{align*}

Finally, \eqref{sumbin} follows by choosing $t=1$ in \eqref{phiasymeven} and \eqref{phiasymodd}.

\end{proof}
\begin{figure}
\includegraphics[scale=0.6]{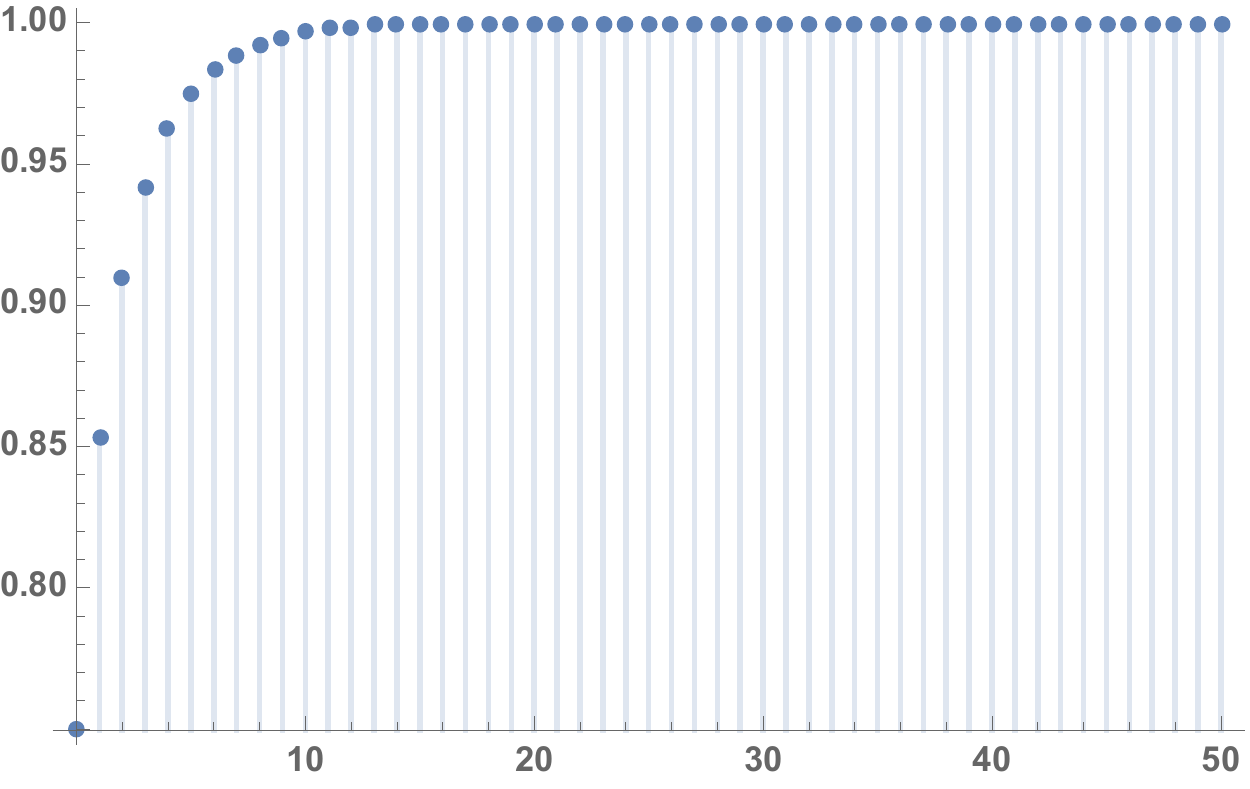}
\caption{The ratio $\varphi_n(1)/2^{n/2+1}$ for $n=0,\dots,50$.\label{figbin}}\end{figure}
We conclude this section with an integral representation for $\varphi_n(t)$.
\begin{theorem}\label{thm2}For all $n\in\NN$
$$\varphi_{n}(t)=(1+t^2)^{n/2}\left(1+\bbinomial{n}{1}\int_0^t\frac{1}{(1+s^2)^{n/2+1}}ds \right).$$
\end{theorem}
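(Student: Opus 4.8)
The plan is to recognise the claimed formula as the solution of a first-order linear ODE in $t$ and to read off that ODE directly from the power series. Writing the target identity as
\[
(1+t^2)^{-n/2}\varphi_n(t)=1+\bbinomial{n}{1}\int_0^t (1+s^2)^{-n/2-1}\,ds,
\]
and differentiating the right-hand side, one checks that the statement is \emph{equivalent} to
\[
(1+t^2)\varphi_n'(t)-n\,t\,\varphi_n(t)=\bbinomial{n}{1},\qquad \varphi_n(0)=1.
\]
So the whole proof reduces to establishing this ODE from the definition $\varphi_n(t)=\sum_{k\ge 0}\bbinomial{n}{k}t^{k}$, and then integrating.

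First I would differentiate term by term (legitimate inside the radius of convergence, which equals $1$ for every $n$ since the recursion \eqref{recphi} propagates the radius of $\varphi_0$ and $\varphi_1$), form the combination $(1+t^2)\varphi_n'-n t\,\varphi_n$, and collect powers of $t$. After reindexing, the coefficient of $t^{0}$ is $\bbinomial{n}{1}$, while for $j\ge 1$ the coefficient of $t^{j}$ is $(j+1)\bbinomial{n}{j+1}+(j-1-n)\bbinomial{n}{j-1}$. The crux of the argument, and the step I expect to carry all the weight, is that this coefficient vanishes: the recursion \eqref{rec2} taken with $k=j-1$ gives $(j+1)\bbinomial{n}{j+1}=(n-j+1)\bbinomial{n}{j-1}$, which is exactly the cancellation needed. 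Thus $(1+t^2)\varphi_n'-n t\,\varphi_n=\bbinomial{n}{1}$ as a series identity.

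It then remains to solve the ODE, which is routine. The integrating factor is $(1+t^2)^{-n/2}$, so $\frac{d}{dt}\bigl[(1+t^2)^{-n/2}\varphi_n(t)\bigr]=\bbinomial{n}{1}(1+t^2)^{-n/2-1}$; integrating from $0$ to $t$ and using $\varphi_n(0)=\bbinomial{n}{0}=1$ yields precisely the asserted representation.

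An alternative, differentiation-free route is induction in steps of two. Setting $\psi_n(t)$ equal to the right-hand side, one verifies $\psi_0=\varphi_0$ and $\psi_1=\varphi_1$ against \eqref{phi0}--\eqref{phi1} (using $\int_0^t(1+s^2)^{-1}ds=\arctan t$ and $\int_0^t(1+s^2)^{-3/2}ds=t/\sqrt{1+t^2}$), and then checks that $\psi_n$ obeys \eqref{recphi}; this last step collapses, via the standard reduction $\int_0^t(1+s^2)^{-m-1}ds=\frac{t}{2m(1+t^2)^{m}}+\frac{2m-1}{2m}\int_0^t(1+s^2)^{-m}ds$ together with $\bbinomial{n+2}{1}=\frac{n+2}{n+1}\bbinomial{n}{1}$, onto the identity $\varphi_{n+2}=(1+t^2)\varphi_n+\frac{1}{n+1}\bbinomial{n}{1}t$. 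I would keep the ODE derivation as the main proof and use the inductive computation only as a cross-check, the only delicate point in either case being the justification of term-by-term operations on the boundary $|t|=1$.
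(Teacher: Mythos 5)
Your proof is correct, and it shares the paper's endgame --- identify $\varphi_n$ as the solution of a first-order linear Cauchy problem and integrate with the factor $(1+t^2)^{-n/2}$ --- but your derivation of the ODE is genuinely different from the paper's. The paper never forms $(1+t^2)\varphi_n'-nt\varphi_n$ directly: it computes $\varphi_{n+2}'$ in two ways, once term by term via the Committee/Chair identity \eqref{rec1}, which gives $\varphi_{n+2}'=\bbinomial{n+2}{1}+(n+2)t\varphi_n$, and once by differentiating the recursion of Lemma \ref{lemmaphi}, and then equates the two expressions, the ODE for $\varphi_n$ falling out after the simplification $\bbinomial{n+2}{1}-\tfrac{1}{n+1}\bbinomial{n}{1}=\bbinomial{n}{1}$. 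You instead work entirely at level $n$: the coefficient cancellation $(j+1)\bbinomial{n}{j+1}+(j-1-n)\bbinomial{n}{j-1}=0$, which is precisely \eqref{rec2} with $k=j-1$, establishes the ODE without ever invoking $\varphi_{n+2}$, Lemma \ref{lemmaphi}, or the Pascal rule on which that lemma rests; this makes your argument more self-contained, resting on a single recursion, while the paper's route recycles machinery it needs anyway for Theorem \ref{thm1}. Two further points in your favour: your ODE, $(1+t^2)\varphi_n'-nt\varphi_n=\bbinomial{n}{1}$, carries the sign actually consistent with the stated integral formula (the Cauchy problem as printed in the paper's proof has $+\bbinomial{n}{1}$ on the left, a slip, since solving it literally would put a minus sign in front of the integral), and you are the only one to flag that term-by-term differentiation is justified for $|t|<1$ while the boundary $|t|=1$ needs an Abel-type continuity argument, an issue the paper passes over in silence. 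Your alternative inductive verification, via the reduction formula for $\int_0^t(1+s^2)^{-m-1}\,ds$ and $\bbinomial{n+2}{1}=\tfrac{n+2}{n+1}\bbinomial{n}{1}$, is also sound and is closer in spirit to the paper's reliance on Lemma \ref{lemmaphi}, but even that is not the paper's actual argument.
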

\begin{proof}
For a fixed $n\in\NN$ consider the derivative of $\varphi_{n+2}(t)$. By \eqref{rec1} one has for all $t\in\RR$
\begin{equation}\label{diffphi1}\begin{split}\varphi_{n+2}'(t)&:=\sum_{k=1}^{+\infty}k\bbinomial{n+2}{k}t^{k-1}=\bbinomial{n+2}{1}+
(n+2)\sum_{k=2}^{+\infty}\frac{k}{n+2}\bbinomial{n+2}{k}t^{k-1}\\&=\bbinomial{n+2}{1}+
(n+2)\sum_{k=2}^{+\infty}\bbinomial{n}{k-2}t^{k-1}\\&=\bbinomial{n+2}{1}+
(n+2)\sum_{k=0}^{+\infty}\bbinomial{n}{k}t^{k+1}=\bbinomial{n+2}{1}+(n+2)t\varphi_n(t).
\end{split}\end{equation}
On the other hand, Lemma \ref{lemmaphi} implies
\begin{equation}\label{recreldiff}
\begin{split}    \varphi_{n+2}'(t)&=\left((1+t^2)\varphi_n(t)+\frac{1}{n+1}\bbinomial{n}{1}t\right)'\\&=(1+t^2)\varphi'_n(t)+2t\varphi_n(t) +\frac{1}{n+1}\bbinomial{n}{1}
\end{split}\end{equation}
Equating \eqref{diffphi1} and \eqref{recreldiff} obtain  
\begin{align*}\bbinomial{n+2}{1}+(n+2)t\varphi_n(t)=(1+t^2)\varphi'_n(t)+2t\varphi_n(t) +\frac{1}{n+1}\bbinomial{n}{1}.\end{align*}
Therefore $\varphi_n$ is the solution of the linear Cauchy problem
\begin{equation}
    \label{CP}
    \begin{cases}
    (1+t^2)\varphi'-n t\varphi+\bbinomial{n}{1}=0\\
    \varphi(0)=1
    \end{cases}
\end{equation}
i.e.,
$$\varphi_{n}(t)=(1+t^2)^{n/2}\left(1+\bbinomial{n}{1}\int_0^t\frac{1}{(1+s^2)^{n/2+1}}ds \right).$$

\end{proof}
\section{ Some partial sums}\label{S4}
We now investigate the following functions
$$\bar\varphi_n(t):=\sum_{k=0}^n\bbinomial{n}{k}t^k;\qquad n\in\NN.$$
Our first result is an adapted version of Theorems \ref{thm1} and \ref{thm2} to this finitary context. 
\begin{theorem}\label{thmphi}
For all $n\in\NN$,  $\bar\varphi_n(t)$ is the solution of the is the solution of the linear Cauchy problem
\begin{equation}
    \label{CP}
    \begin{cases}
    (1+t^2)\bar\varphi'-n \bar\varphi+\bbinomial{n}{1}(t^n-1)=0\\
    \bar\varphi(0)=1
    \end{cases}
\end{equation}{}
i.e., 
\begin{equation}\label{intform}
\bar\varphi_n(t)=(1+t^2)^{n/2}\left(1-\bbinomial{n}{1}\int_0^t\frac{s^n-1}{(1+s^2)^{n/2}}ds \right).\end{equation}

Moreover for all $n\in\NN$
\begin{equation}\label{phi2nga}
\bar\varphi_{2n}(t)=(1+t^2)^{n}\left(1+\frac{1}{\pi}\sum_{k=1}^{n}\frac{1}{k}\frac{2^{2k}}{\binom{2k}{k}}\frac{t^{2k-1}+t}{(t^2+1)^{k}}\right)
\end{equation}
and
\begin{equation}\label{phi2n+1ga}
\bar\varphi_{2n+1}(t)=(1+t^2)^{n}\sum_{k=0}^{n}\binom{2k}{k}\frac{t^{2k}+t}{2^{2k}(t^2+1)^{k}}.
\end{equation}
\end{theorem}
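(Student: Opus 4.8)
The plan is to run the finite-sum analogue of the argument behind Lemma \ref{lemmaphi} and Theorem \ref{thm2}, the one genuinely new feature being a truncation boundary term.

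First I would establish a finite recursion for $\bar\varphi_n$. Splitting off the $k=0,1$ terms and reindexing, Pascal's rule \eqref{tartagliagen} gives $\bar\varphi_{n+2}(t)=1+\bbinomial{n+2}{1}t+\sum_{k=0}^{n}\bigl(\bbinomial{n}{k}+\bbinomial{n}{k+2}\bigr)t^{k+2}$. The first inner sum reproduces $t^2\bar\varphi_n(t)$, while the second, after shifting to $\sum_{j=2}^{n+2}\bbinomial{n}{j}t^{j}$, equals $\bar\varphi_n(t)-1-\bbinomial{n}{1}t$ plus exactly one surviving tail term: $\bbinomial{n}{n+2}=0$ by \eqref{24}, but $\bbinomial{n}{n+1}=\frac{1}{n+1}\bbinomial{n}{1}$ by \eqref{23} does not vanish. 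Together with the identity $\bbinomial{n+2}{1}-\bbinomial{n}{1}=\frac{1}{n+1}\bbinomial{n}{1}$ this yields
$$\bar\varphi_{n+2}(t)=(1+t^2)\bar\varphi_n(t)+\tfrac{1}{n+1}\bbinomial{n}{1}\,t\,(1+t^{n}),$$
the counterpart of \eqref{recphi} with the extra $t^{n+1}$ coming precisely from the truncation.

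Next I would extract the Cauchy problem exactly as in Theorem \ref{thm2}. Differentiating term by term and using the Committee/Chair identity \eqref{rec1} in the form $k\bbinomial{n+2}{k}=(n+2)\bbinomial{n}{k-2}$, the index shift $k\mapsto k-2$ maps $\{2,\dots,n+2\}$ bijectively onto $\{0,\dots,n\}$, so no boundary term appears and $\bar\varphi_{n+2}'(t)=\bbinomial{n+2}{1}+(n+2)t\,\bar\varphi_n(t)$. Differentiating the recursion above and equating the two expressions for $\bar\varphi_{n+2}'$, the $\bbinomial{n}{1}t^{n}$ term survives and, after simplifying with $\bbinomial{n+2}{1}-\frac{1}{n+1}\bbinomial{n}{1}=\bbinomial{n}{1}$, one obtains $(1+t^2)\bar\varphi_n'-nt\,\bar\varphi_n+\bbinomial{n}{1}(t^{n}-1)=0$ with $\bar\varphi_n(0)=\bbinomial{n}{0}=1$. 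Solving this first-order linear equation with the integrating factor $(1+t^2)^{-n/2}$ (the leading coefficient $1+t^2$ contributing an additional power of $1+s^2$ in the denominator of the integrand) gives the integral representation \eqref{intform}.

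Finally, for the explicit formulas \eqref{phi2nga}--\eqref{phi2n+1ga} I would iterate the recursion: dividing by $(1+t^2)^{m+1}$ telescopes the even chain from $\bar\varphi_0=1$ and the odd chain from $\bar\varphi_1=1+t$, producing $\frac{\bar\varphi_{2n}(t)}{(1+t^2)^n}=1+\sum_{m=0}^{n-1}\frac{1}{2m+1}\bbinomial{2m}{1}\frac{t+t^{2m+1}}{(1+t^2)^{m+1}}$ and the analogous odd sum. Substituting the closed forms \eqref{21even} and \eqref{21odd}, reindexing $k=m+1$, and using $\binom{2k}{k}=\frac{2(2k-1)}{k}\binom{2k-2}{k-1}$ to convert the coefficients matches the claimed summands $\frac{1}{k}\frac{2^{2k}}{\binom{2k}{k}}$ and $\frac{1}{2^{2k}}\binom{2k}{k}$ exactly. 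The main obstacle is the bookkeeping in the first step, namely correctly isolating the single nonvanishing tail term $\bbinomial{n}{n+1}t^{n+1}$: it is this truncation artifact, absent in the infinite-sum Lemma \ref{lemmaphi}, that produces the $t^{n}$ inhomogeneity in the Cauchy problem and the symmetric $t^{2k-1}+t$, $t^{2k}+t$ numerators in the closed forms.
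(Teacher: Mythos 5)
Your proposal is correct and follows essentially the same route as the paper: the finite Pascal-rule recursion $\bar\varphi_{n+2}=(1+t^2)\bar\varphi_n+\frac{1}{n+1}\bbinomial{n}{1}t(1+t^n)$ with the truncation term, the derivative identity via the Committee/Chair formula, equating the two to get the Cauchy problem, and iterating/telescoping the recursion with \eqref{21even}--\eqref{21odd} for the closed forms. You even correctly obtain the exponent $n/2+1$ in the integrand of \eqref{intform} (the theorem statement's $n/2$ is a typo; the paper's own proof has $n/2+1$), and you carry out the odd case \eqref{phi2n+1ga} that the paper omits as "similar."
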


\begin{proof}
For a fixed $n\in\NN$ consider the derivative of $\bar\varphi_{n+2}(t)$. By \eqref{rec1} one has for all $t\in\RR$
\begin{equation}\label{diffphi1}\begin{split}\bar\varphi_{n+2}'(t)&:=\sum_{k=1}^{n+2}k\bbinomial{n+2}{k}t^{k-1}=\bbinomial{n+2}{1}+
(n+2)\sum_{k=2}^{n+2}\frac{k}{n+2}\bbinomial{n+2}{k}t^{k-1}\\&=\bbinomial{n+2}{1}+
(n+2)\sum_{k=2}^{n+2}\bbinomial{n}{k-2}t^{k-1}\\&=\bbinomial{n+2}{1}+
(n+2)\sum_{k=0}^{n}\bbinomial{n}{k}t^{k+1}=\bbinomial{n+2}{1}+(n+2)t\bar\varphi_n(t).
\end{split}\end{equation}
On the other hand, Proposition \ref{tartagliagen} implies
\begin{equation}\label{recrel}
\begin{split}    \bar\varphi_{n+2}(t)&=\sum_{k=0}^{n+2}\bbinomial{n+2}{k}t^k=\sum_{k=0}^{n+2}\left(\bbinomial{n}{k-2}+\bbinomial{n}{k}\right)t^k\\
    &=\sum_{k=0}^{n+2}\bbinomial{n}{k-2}t^k+\sum_{k=0}^{n+2}\bbinomial{n}{k}t^k=\sum_{k=-2}^{n}\bbinomial{n}{k}t^{k+2}+\sum_{k=0}^{n+2}\bbinomial{n}{k}t^{k}\\
     &=\bbinomial{n}{-1}t+(t^2+1)\bar\varphi_n(t)+\bbinomial{n}{n+1}t^{n+1}\\
     &=(t^2+1)\bar\varphi_n(t)+\bbinomial{n}{-1}\left(t^{n+1}+t\right).
\end{split}\end{equation}
By differentiating in both sides we then obtain from \eqref{diffphi1} 
$$\bbinomial{n+2}{1}+(n+2)t\bar\varphi_n(t)=(1+t^2)\bar\varphi'_n(t)+2t\bar\varphi_n(t) +\bbinomial{n}{1}t^n+\bbinomial{n}{-1}.$$
Therefore $\bar\varphi_n$ is the solution of the linear Cauchy problem
\begin{equation}
    \label{CP}
    \begin{cases}
    (1+t^2)\bar\varphi'-n t\bar\varphi+\bbinomial{n}{1}(t^n-1)=0\\
    \bar\varphi(0)=1
    \end{cases}
\end{equation}
i.e.,
$$\bar\varphi_{n}(t)=(1+t^2)^{n/2}\left(1-\bbinomial{n}{1}\int_0^t\frac{s^n-1}{(1+s^2)^{n/2+1}}ds \right).$$

By an inductive argument based on \eqref{recrel}, i.e., on the recursive relation
$$\bar\varphi_{n+2}(t)=(1+t^2)\bar\varphi_n(t)+\bbinomial{n}{-1}(t^{n+1}+t),$$
 one readily get
 \begin{equation}\label{phi2n}
\bar\varphi_{2n}(t)=(1+t^2)^{n}\left(1+\sum_{k=0}^{n-1}\bbinomial{2k}{-1}\frac{t^{2k+1}+t}{(t^2+1)^{k+1}}\right)
\end{equation}
and
\begin{equation}\label{phi2n+1}
\bar\varphi_{2n+1}(t)=(1+t^2)^{n}\left(1+t+\sum_{k=0}^{n-1}\bbinomial{2k+1}{-1}\frac{t^{2k+2}+t}{(t^2+1)^{k+1}}\right).
\end{equation}

 We  use \eqref{phi2n} to prove \eqref{phi2nga}. We have
 \begin{align*}
\bar\varphi_{2n}(t)&=(1+t^2)^{n}\left(1+\sum_{k=0}^{n-1}\bbinomial{2k}{-1}\frac{t^{2k+1}+t}{(t^2+1)^{k+1}}\right)\\
&=(1+t^2)^{n}\left(1+\sum_{k=0}^{n-1}\frac{1}{2k+1}\bbinomial{2k}{1}\frac{t^{2k+1}+t}{(t^2+1)^{k+1}}\right)\\
&=(1+t^2)^{n}\left(1+\frac{1}{\pi}\sum_{k=0}^{n-1}\frac{1}{2k+1}\frac{2^{2k+1}}{\binom{2k}{k}}\frac{t^{2k+1}+t}{(t^2+1)^{k+1}}\right)\\
&=(1+t^2)^{n}\left(1+\frac{1}{\pi}\sum_{k=0}^{n-1}\frac{1}{k+1}\frac{2^{2k+2}}{\frac{4k+2}{k+1}\binom{2k}{k}}\frac{t^{2k+1}+t}{(t^2+1)^{k+1}}\right)\\
&=(1+t^2)^{n}\left(1+\frac{1}{\pi}\sum_{k=0}^{n-1}\frac{1}{k+1}\frac{2^{2k+2}}{\binom{2(k+1)}{k+1}}\frac{t^{2k+1}+t}{(t^2+1)^{k+1}}\right)\\
&=(1+t^2)^{n}\left(1+\frac{1}{\pi}\sum_{k=1}^{n}\frac{1}{k}\frac{2^{2k}}{\binom{2k}{k}}\frac{t^{2k-1}+t}{(t^2+1)^{k}}\right).
\end{align*}
 The proof of \eqref{phi2n+1ga} is similar and we omit it.
\end{proof}
We now focus on the particular value $\bar\varphi_n(1)=\sum_{k=0}^n\left[n\atop k\right]$, see also Figure \ref{finitephi}. 

\begin{theorem}
For all $n\in\NN$
\begin{equation}\label{integral}\sum_{k=0}^n\bbinomial{n}{k}=2^{n/2}\left(1+\bbinomial{n}{1}\int_0^1\frac{1-s^n}{(1+s^2)^{n/2+1}}ds \right).\end{equation}
Moreover, one also has for $n\in\NN$
\begin{equation}\label{evensum}
\sum_{k=0}^{2n}\bbinomial{2n}{k}=2^{n}\left(1+\frac{2}{\pi}\sum_{k=1}^{n} \frac{1}{k\binom{2k}{k}}2^{k}\right)
\end{equation}
and
\begin{equation}\label{oddsum}
\sum_{k=0}^{2n+1}\bbinomial{2n+1}{k}=2^{n+1}\sum_{k=0}^{n}\binom{2k}{k}\frac{1}{2^{3k}}.
\end{equation}
Finally, the following asymptotic estimate holds
\begin{equation}\label{asymbar}
\sum_{k=0}^{n}\bbinomial{n}{k}\sim 2^{n/2+1}\qquad \text{as }n\to\infty.\end{equation}
\end{theorem}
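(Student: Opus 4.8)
The first three displayed identities require no new idea: each is the evaluation at $t=1$ of a closed form already established in Theorem \ref{thmphi}. Substituting $t=1$ in the integral representation \eqref{intform} gives $(1+t^2)^{n/2}=2^{n/2}$ and rewrites the integrand $s^n-1$ as $-(1-s^n)$; absorbing the outer sign yields \eqref{integral} at once. Setting $t=1$ in \eqref{phi2nga} collapses each summand, since $(t^{2k-1}+t)/(t^2+1)^k$ becomes $2/2^k=2^{1-k}$, so that $\frac1k\frac{2^{2k}}{\binom{2k}{k}}\cdot 2^{1-k}=\frac{2^{k+1}}{k\binom{2k}{k}}=2\cdot\frac{2^k}{k\binom{2k}{k}}$, and the factor $2$ combines with the leading $\frac1\pi$ to produce exactly \eqref{evensum}. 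The same substitution in \eqref{phi2n+1ga} sends $(t^{2k}+t)/(t^2+1)^k$ to $2^{1-k}$ and, after pulling out one power of $2$, gives \eqref{oddsum}. In short, these three are bookkeeping on top of Theorem \ref{thmphi}.

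The asymptotic estimate \eqref{asymbar} is the only part needing an argument beyond substitution, and the plan is to treat the even and odd subsequences separately through \eqref{evensum} and \eqref{oddsum}, recognising the finite sums there as partial sums of the two power series $W$ and $Z$ recalled in the proof of Theorem \ref{thm1}, evaluated at $x=1/2$. For the even case, $\sum_{k=1}^n \frac{2^k}{k\binom{2k}{k}}$ is the $n$-th partial sum of $W(x)=\sum_{k\geq1}\frac{2^{2k}}{k\binom{2k}{k}}x^k$ at $x=1/2$; since $W(1/2)=2\sqrt{\tfrac{1/2}{1/2}}\arctan\sqrt{\tfrac{1/2}{1/2}}=2\arctan 1=\pi/2$, the bracket in \eqref{evensum} tends to $1+\frac2\pi\cdot\frac\pi2=2$, whence $\sum_{k=0}^{2n}\bbinomial{2n}{k}\sim 2\cdot 2^n=2^{(2n)/2+1}$. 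For the odd case, $\sum_{k=0}^n \binom{2k}{k}2^{-3k}$ is the $n$-th partial sum of $Z(x)=\sum_{k\geq0}\frac{1}{2^{2k}}\binom{2k}{k}x^k$ at $x=1/2$; since $Z(1/2)=1/\sqrt{1-1/2}=\sqrt2$, we obtain $\sum_{k=0}^{2n+1}\bbinomial{2n+1}{k}\sim 2^{n+1}\sqrt2=2^{n+3/2}=2^{(2n+1)/2+1}$. Both subsequences therefore satisfy the same asymptotic, which is precisely \eqref{asymbar}.

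The single point demanding care is the convergence at $x=1/2$: one must verify that $1/2$ lies strictly inside the intervals of convergence recorded earlier, namely $[-1,1)$ for $W$ and $(-1,1)$ for $Z$, so that the partial sums genuinely converge to $W(1/2)$ and $Z(1/2)$ and may legitimately be replaced by these limits when extracting the leading order. This is immediate, so I expect no real obstacle; the entire substance of the theorem is carried by the closed forms of Theorem \ref{thmphi} together with the two evaluations $W(1/2)=\pi/2$ and $Z(1/2)=\sqrt2$.
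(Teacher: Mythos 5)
Your proposal is correct and follows essentially the same route as the paper: the three identities are obtained by setting $t=1$ in Theorem \ref{thmphi}, and the asymptotics by splitting into even/odd indices and passing to the limits of the partial sums of the two central-binomial series, giving $1+\tfrac{2}{\pi}\cdot\tfrac{\pi}{2}=2$ and $\sqrt{2}$ respectively. The only cosmetic difference is that you evaluate $Z(x)=\sum_k 2^{-2k}\binom{2k}{k}x^k$ at $x=1/2$ while the paper uses the equivalent parameterization $\sum_k\binom{2k}{k}x^k$ at $x=1/8$; both yield $\sum_k\binom{2k}{k}2^{-3k}=\sqrt{2}$, and your explicit check that the evaluation points lie inside the intervals of convergence is a sound (implicit in the paper) precaution.
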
 
\begin{proof}
The equalities in \eqref{integral},\eqref{evensum} and \eqref{oddsum} readily follow by  applying to $t=1$ in Theorem \ref{thmphi}.

As in the proof of Theorem \ref{thm1}, to show \eqref{asymbar} we distinguish two cases depending on the parity of $n$. If $n$ is even, we recall from \cite{lasalvezza} that 
$$W(x)=\sum_{k=1}^\infty \frac{4^k}{k\binom{2k}{k}}x^k=2\sqrt{\frac{x}{1-x}}\arctan{\sqrt{\frac{x}{1-x}}}.$$
Then we have
$$W(1/2)=\sum_{k=1}^\infty \frac{2^k}{k\binom{2k}{k}}=\frac{\pi}{2}.$$
In view of \eqref{evensum}, we get
$$\lim_{n\to+\infty} \frac{\varphi_{2n}(1)}{2^n}=1+\frac{2}{\pi}W(1/2)=2.$$
Similarly, when $n$ is odd, note that
$$Z(x)=\sum_{k=0}^\infty {\binom{2k}{k}}x^k=\sqrt{\frac{1}{1-4x}}$$
and in particular we have
$$Z(1/8)=\sum_{k=0}^\infty {\binom{2k}{k}}\frac{1}{2^{3k}}=\sqrt{2}.$$
In view of \eqref{oddsum}, we get
$$\lim_{n\to+\infty} \frac{\varphi_{2n+1}(1)}{2^n}=2W(1/8)=2\sqrt{2},$$
and this concludes the proof.
\end{proof}
\begin{figure}\centering
\includegraphics[scale=0.6]{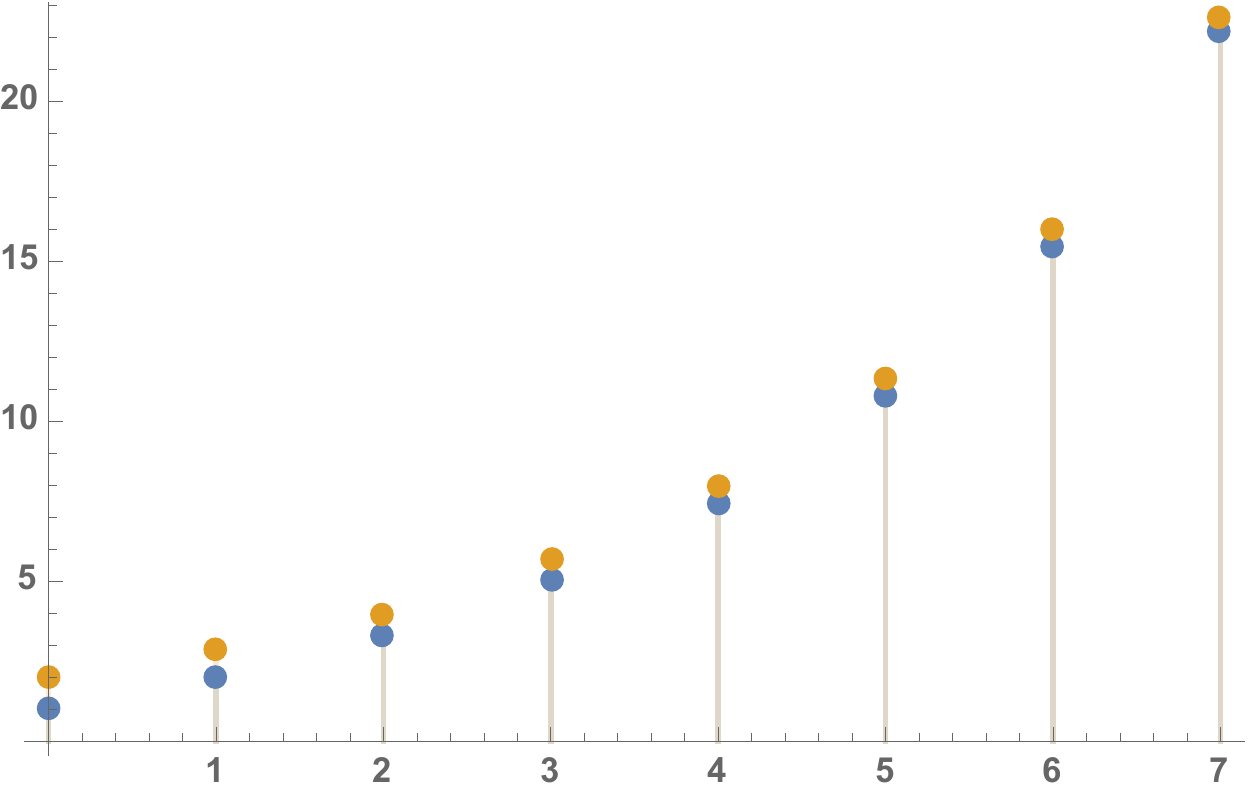}
\caption{$\bar \varphi_n(1)$ (in blue) and $2^{n/2+1}$ (in orange).\label{finitephi}}
\end{figure}
We point out that from the proof of Theorem \ref{thmphi} also follow the identies
$$\sum_{k=0}^{2n} \bbinomial{2n}{k}=\displaystyle{2^{n}\left(1+\sum_{k=0}^{n-1}\frac{1}{2k+1} \bbinomial{2k}{1}\frac{1}{2^k}\right)},$$ $$ \sum_{k=0}^{2n+1} \bbinomial{2n+1}{k}=\displaystyle{2^{n}\left(2+\sum_{k=0}^{n-1}\frac{1}{2k+2} \bbinomial{2k+1}{1}\frac{1}{2^k}\right)},$$
see Table \ref{tsum} for other, equivalent formulations. This motivates to collect in the following proposition some related finite sums.

\begin{proposition}For all $n\in\NN$
\begin{align}
&\displaystyle{\sum_{k=0}^{n-1}\frac{1}{2k+1} \bbinomial{2k}{1}}=\bbinomial{2n}{1}-\dfrac{2}{\pi}\label{s1}\\
&\displaystyle{\sum_{k=0}^{n-1}\frac{1}{2k+2} \bbinomial{2k+1}{1}}=\bbinomial{2n+1}{1}-1\label{s2}\\
&\displaystyle{\sum_{k=0}^{n} \bbinomial{k}{2}}=\dfrac{n(n+1)}{4}\label{s3}\\
&\displaystyle{\sum_{k=1}^{n} \frac{1}{k}\bbinomial{k}{2}}=\dfrac{n}{2}\label{s4}\\
&\displaystyle{\sum_{k=0}^{n} \frac{1}{k+1}\bbinomial{k}{2}}=\dfrac{(n+1)}{2}-\dfrac{1}{2}H_{n+1}\label{s5}
\end{align}
where $H_{n}:=\sum_{k=1}^n\frac{1}{k}$ is the $n$-th armonic number.
\end{proposition}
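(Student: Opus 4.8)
The plan is to split the five identities into two families handled by completely different means. Identities \eqref{s3}, \eqref{s4} and \eqref{s5} all feature the coefficient $\bbinomial{k}{2}$, and for this the decisive simplification is the closed form $\bbinomial{k}{2}=k/2$ recorded in Table 2 (equivalently, a one-line computation from the definition using $\Gamma(3/2)=\sqrt\pi/2$). Substituting it collapses each sum to an elementary one: \eqref{s3} becomes $\tfrac12\sum_{k=0}^n k=\tfrac{n(n+1)}4$, \eqref{s4} becomes $\sum_{k=1}^n\tfrac12=\tfrac n2$, and \eqref{s5} becomes $\tfrac12\sum_{k=0}^n\tfrac{k}{k+1}=\tfrac12\sum_{k=0}^n\bigl(1-\tfrac1{k+1}\bigr)=\tfrac{n+1}2-\tfrac12H_{n+1}$, where the last step uses $\sum_{k=0}^n\tfrac1{k+1}=H_{n+1}$. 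These three require no more than standard summation.

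The core of the argument is identities \eqref{s1} and \eqref{s2}, which I would derive from a single telescoping relation. First I would apply Pascal's rule \eqref{tartagliagen} at $k=-1$ to get $\bbinomial{n}{-1}+\bbinomial{n}{1}=\bbinomial{n+2}{1}$, and then combine it with the closed form \eqref{23}, namely $\bbinomial{n}{-1}=\tfrac1{n+1}\bbinomial{n}{1}$, to obtain the key difference formula
$$\bbinomial{n+2}{1}-\bbinomial{n}{1}=\frac{1}{n+1}\bbinomial{n}{1}.$$
For \eqref{s1} I would specialize this to $n=2k$, so that the summand $\tfrac1{2k+1}\bbinomial{2k}{1}$ is exactly $\bbinomial{2k+2}{1}-\bbinomial{2k}{1}$; summing over $k=0,\dots,n-1$ then telescopes to $\bbinomial{2n}{1}-\bbinomial{0}{1}$, which equals $\bbinomial{2n}{1}-\tfrac2\pi$ since $\bbinomial{0}{1}=2/\pi$ was computed in the proof of \eqref{phi0}. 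For \eqref{s2} I would instead take $n=2k+1$, so that $\tfrac1{2k+2}\bbinomial{2k+1}{1}=\bbinomial{2k+3}{1}-\bbinomial{2k+1}{1}$, and the same telescoping collapses the sum to $\bbinomial{2n+1}{1}-\bbinomial{1}{1}=\bbinomial{2n+1}{1}-1$, using $\bbinomial{1}{1}=1$ from \eqref{21}.

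The only genuine obstacle is recognizing at the outset that the awkward weights $\tfrac1{2k+1}$ and $\tfrac1{2k+2}$ appearing in \eqref{s1} and \eqref{s2} are precisely the factors supplied by \eqref{23}, so that each summand is a consecutive difference of the sequence $\bbinomial{n}{1}$. Once this is seen, both proofs are pure telescoping and the entire work reduces to identifying the correct boundary terms $\bbinomial{0}{1}=2/\pi$ and $\bbinomial{1}{1}=1$.
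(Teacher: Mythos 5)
Your handling of \eqref{s3}--\eqref{s5} coincides with the paper's: both reduce everything to the closed form $\bbinomial{k}{2}=k/2$ and elementary summation, so nothing to compare there. For \eqref{s1} and \eqref{s2}, however, your route is correct but genuinely different. The paper rewrites the summands via \eqref{21even} and \eqref{21odd}, turning the two sums into partial sums of inverse central binomials $\frac{1}{\pi}\sum_{k=1}^{n}\frac{1}{k}\frac{2^{2k}}{\binom{2k}{k}}$ and of central binomials $\sum_{k=1}^{n}\frac{\binom{2k}{k}}{2^{2k}}$, and then quotes the known closed forms for these from the literature (Sprugnoli \cite{lasalvezza} and Gould--Quaintance \cite{gouldvol3}, respectively). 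You instead combine Pascal's rule \eqref{tartagliagen} at $k=-1$ with \eqref{23} to get the difference identity $\bbinomial{n+2}{1}-\bbinomial{n}{1}=\frac{1}{n+1}\bbinomial{n}{1}$ (an identity that in fact already appears, implicitly, in the last step of the paper's proof of Lemma \ref{lemmaphi}), so that each summand in \eqref{s1}, \eqref{s2} is a consecutive difference of the sequence $\bbinomial{\cdot}{1}$ along even (resp.\ odd) indices; the sums then telescope, with the correct boundary values $\bbinomial{0}{1}=2/\pi$ (computed in the proof of \eqref{phi0}) and $\bbinomial{1}{1}=1$ (by \eqref{21}). I checked the index bookkeeping and both telescopes close exactly as you claim. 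Your argument is self-contained and more elementary: it needs no external summation formulas, and, read backwards through \eqref{21even} and \eqref{21odd}, it actually yields an independent proof of the cited central-binomial identities $\sum_{k=1}^{n}\frac{4^{k}}{k\binom{2k}{k}}=\frac{2\cdot 4^{n}}{\binom{2n}{n}}-2$ and $\sum_{k=1}^{n}\frac{\binom{2k}{k}}{4^{k}}=\frac{(2n+1)\binom{2n}{n}}{4^{n}}-1$. What the paper's approach buys in exchange is the explicit bridge between these fractional binomial sums and the central-binomial literature, which is precisely what populates the equivalent formulations in Table \ref{tsum}.
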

\begin{proof}
We have by \eqref{21even} and by the finite sum of inverse central binomials established in \cite{lasalvezza}
$$\displaystyle{\sum_{k=0}^{n-1}\frac{1}{2k+1} \bbinomial{2k}{1}}=\displaystyle{\frac{1}{\pi}\sum_{k=1}^n \frac{1}{k}\frac{2^{2k}}{\binom{2k}{k}}}=\displaystyle{\frac{2^{2n+1}}{\pi \binom{2n}{n}}-\frac{2}{\pi}}=\bbinomial{2n}{1}-\dfrac{2}{\pi};$$
By \eqref{21odd} (see also \cite{gouldvol3} for the finite sum of central binomials)
$$\displaystyle{\sum_{k=0}^{n-1}\frac{1}{2k+2} \bbinomial{2k+1}{1}}=\displaystyle{\sum_{k=1}^n \frac{\binom{2k}{k}}{2^{2k}}}=\displaystyle{\frac{2n+1}{2^{2n} \binom{2n}{n}}-1}=\bbinomial{2n+1}{1}-1$$
The equalities \eqref{s3}-\eqref{s5} are a direct consequence of the fact that $\left[ k\atop 2\right]=k/2$ for all $k\in\NN$.
\end{proof}

\begin{table}{\small
\begin{tabular}{|c|c|c|c|c|}\hline&&&&\\
$\displaystyle{\sum_{k=0}^n \binom{n}{k}=2^n}$&$\displaystyle{\sum_{k=0}^\infty \binom{n/2}{k}=2^{n/2}}$&$\displaystyle{\int_{-\infty}^{+\infty} \binom{n/2}{x/2}dx=2^{n/2+1}}$&$\displaystyle{\sum_{k=0}^{+\infty} \bbinomial{n}{k}\sim 2^{n/2+1}}$&$\displaystyle{\sum_{k=0}^{n} \bbinomial{n}{k}\sim 2^{n/2+1}}$\\&&&&\\\hline
\end{tabular}}
\caption{A comparison between Theorem 3.2 and earlier related versions of Binomial Theorem (note that the equality involving the integral is derived from \eqref{integralsum} by a change of variable) }
\end{table}
\begin{table}
{\small \begin{tabular}{|c||c|c|c|}\hline
Sum& \multicolumn{3}{l|}{Equivalent formulations}\\\hline&&&\\
$\displaystyle{\sum_{k=0}^{2n}\bbinomial{2n}{k}}$&$\displaystyle{2^{n}\left(1+\frac{2}{\pi}\sum_{k=1}^{n} \frac{1}{k\binom{2k}{k}}2^{k}\right)}$&$\displaystyle{2^{n}\left(1+\sum_{k=0}^{n-1} \bbinomial{2k}{-1}\frac{1}{2^k}\right)}$&$\displaystyle{2^{n}\left(1+\sum_{k=0}^{n-1}\frac{1}{2k+1} \frac{1}{2^k}\right)}$\\&&&\\\hline&&&\\&&&\\
$\displaystyle{\sum_{k=0}^{2n+1}\bbinomial{2n+1}{k}}$&$\displaystyle{
2^{n+1}\sum_{k=0}^{n}\binom{2k}{k}\frac{1}{2^{3k}}}$&$\displaystyle{2^{n}\left(2+\sum_{k=0}^{n-1} \bbinomial{2k+1}{-1}\frac{1}{2^k}\right)}$&$\displaystyle{2^{n+1}\left(2+\sum_{k=0}^{n-1}\frac{1}{2k+2} \bbinomial{2k+1}{1}\frac{1}{2^k}\right)}$
\\&&&\\\hline&&&\\&&&\\
$\displaystyle{\sum_{k=0}^{n-1}\frac{1}{2k+1} \bbinomial{2k}{1}}$&$\displaystyle{\frac{1}{\pi}\sum_{k=1}^n \frac{1}{k}\frac{2^{2k}}{\binom{2k}{k}}}$&$\displaystyle{\frac{2^{2n+1}}{\pi \binom{2n}{n}}-\frac{2}{\pi}}$&$\bbinomial{2n}{1}-\dfrac{2}{\pi}$\\&&&\\
$\displaystyle{\sum_{k=0}^{n-1}\frac{1}{2k+2} \bbinomial{2k+1}{1}}$&$\displaystyle{\sum_{k=1}^n \frac{\binom{2k}{k}}{2^{2k}}}$&$\displaystyle{\frac{2n+1}{2^{2n} \binom{2n}{n}}-1}$&$\bbinomial{2n+1}{1}-1$\\&&&\\\hline&&&\\
$\displaystyle{\sum_{k=0}^{n} \bbinomial{k}{2}}$&$\displaystyle{\sum_{k=0}^n \frac{k}{2}}$&$\dfrac{n(n+1)}{4}$&$\bbinomial{n}{2}\cdot\bbinomial{n+1}{2}$\\&&&\\
$\displaystyle{\sum_{k=1}^{n} \frac{1}{k}\bbinomial{k}{2}}$&$\displaystyle{\sum_{k=1}^n \frac{1}{2}}$&$\dfrac{n}{2}$&$\bbinomial{n}{2}$\\&&&\\
$\displaystyle{\sum_{k=0}^{n} \frac{1}{k+1}\bbinomial{k}{2}}$&$\displaystyle{\sum_{k=0}^n \frac{k}{2(k+1)}}$&$\dfrac{(n+1)}{2}-\dfrac{1}{2}H_{n+1}$&\\&&&\\\hline
\end{tabular}}
\caption{Identities involving $\varphi_{n}(1)$  -- $H_n$ is the $n$-th armonic number\label{tsum}.}
\end{table}

\end{document}